\newtheorem{thm}{Theorem}[section]
\newtheorem{corollary}[thm]{Corollary}
\newtheorem{define}[thm]{Definition}
\newtheorem{assume}[thm]{Assumption}
\numberwithin{equation}{section}
\numberwithin{figure}{section}
\title{Variational Partitioned Runge-Kutta methods\\ for Lagrangians linear in velocities}
\author{
Tomasz M. Tyranowski\footnote{\texttt{maximus@caltech.edu}} \quad \qquad Mathieu Desbrun\footnote{\texttt{mathieu@caltech.edu}} \\\\ Computing {\small $+$} Mathematical Sciences \\ California Institute of Technology\\Pasadena, CA 91125, USA
}
\date{}
\begin{document}

\maketitle

\begin{abstract}
In this paper we construct higher-order variational integrators for a class of degenerate systems described by Lagrangians that are linear in velocities. We analyze the geometry underlying such systems and develop the appropriate theory for variational integration. Our main observation is that the evolution takes place on the primary constraint and the \textquoteleft Hamiltonian' equations of motion can be formulated as an index-1 differential-algebraic system. We also construct variational Runge-Kutta methods and analyze their properties. The general properties of Runge-Kutta methods depend on the \textquoteleft velocity' part of the Lagrangian. If the \textquoteleft velocity' part is also linear in the position coordinate, then we show that non-partitioned variational Runge-Kutta methods are equivalent to integration of the corresponding first-order Euler-Lagrange equations, which have the form of a Poisson system with a constant structure matrix, and the classical properties of the Runge-Kutta method are retained. If the \textquoteleft velocity' part is nonlinear in the position coordinate, we observe a reduction of the order of convergence, which is typical of numerical integration of DAEs. We verify our results through numerical experiments for various dynamical systems.

\end{abstract}

\section{Introduction}
\label{sec:intro}

Geometric integrators are numerical methods that preserve geometric structures and properties of the flow of a differential equation. Structure-preserving integrators have attracted considerable interest due to their excellent numerical behavior, especially for long-time integration of equations possessing geometric properties (see \cite{HLWGeometric}, \cite{McLachlanQuispel}, \cite{SanzSerna}).

An important class of structure-preserving integrators are \emph{variational integrators} (see \cite{MarsdenWestVarInt}). This type of numerical schemes is based on discrete variational principles and provides a natural framework for the discretization of Lagrangian systems, including forced, dissipative, or constrained ones. Variational integrators were introduced in the context of finite-dimensional mechanical systems, but were later generalized to Lagrangian field theories (see \cite{MarsdenPatrickShkoller}) and applied in many computations, for example in elasticity (\cite{LewAVI}), electrodynamics (\cite{SternDesbrun}), or fluid dynamics (\cite{Pavlov}).

Theoretical aspects of variational integration are well understood in the case when the Lagrangian describing the considered system is regular, that is, when the corresponding Legendre transform is (at least locally) invertible. However, the corresponding theory for degenerate Lagrangian systems is less developed. The analysis of degenerate systems becomes a little more cumbersome, because the Euler-Lagrange equations may cease to be second order, or may not even make any sense at all. In the latter case one needs to determine if there exists a submanifold of the configuration bundle $TQ$ on which consistent equations of motion can be derived. This can be accomplished by applying the Dirac theory of constraints or the pre-symplectic constraint algorithm (see \cite{GotayPhDThesis}, \cite{MarsdenRatiuSymmetry}). 

A particularly simple case of degeneracy occurs when the Lagrangian is linear in velocities. In that case, the dynamics of the system is defined on the configuration manifold $Q$ itself, rather than its tangent bundle $TQ$, provided that some regularity conditions are satisfied. Such systems arise in many physical applications, including interacting point vortices in the plane (see \cite{Newton}, \cite{RowleyMarsden}, \cite{VankerschaverLeok}), or partial differential equations such as the nonlinear Schr\"{o}dinger (\cite{FaouGeometricSchrodinger}), KdV (\cite{DrazinSolitonsIntroduction}, \cite{GotayKdV}) or Camassa-Holm equations (\cite{CamassaHolm}, \cite{CamassaHolmHyman}). In Section~\ref{sec: Numerical experiments} we show how certain Poisson systems can be recast as Lagrangian systems whose Lagrangians are linear in velocities. Therefore, our approach offers a new perspective on geometric integration of Poisson systems, which often arise as semi-discretizations of integrable nonlinear partial differential equations, e.g., the Toda or Volterra lattice equations, and play an important role in the modeling of many physical phenomena (see \cite{ErgencKarasozen}, \cite{Karasozen}, \cite{Suris}).

This paper is organized as follows. In Section~\ref{sec: Geometric setup} we introduce a proper geometric setup and discuss the properties of systems that are linear in velocities. In Section~\ref{sec: Veselov discretization and Discrete Mechanics} we analyze the general properties of variational integrators and point out how the relevant theory differs from the non-degenerate case. In Section~\ref{sec: Variational partitioned Runge-Kutta methods} we introduce variational partitioned Runge-Kutta methods and discuss their relation to numerical integration of differential-algebraic systems. In Section~\ref{sec: Numerical experiments} we present the results of our numerical experiments for Kepler's problem, a system of two interacting vortices, and the Lotka-Volterra model. We summarize our work and discuss possible extensions in Section~\ref{sec: summary}.

\section{Geometric setup}
\label{sec: Geometric setup}

Let $Q$ be the configuration manifold and $TQ$ its tangent bundle. Throughout this work we will assume that the dimension of the configuration manifold $\dim Q = n$ is even. We will further assume $Q$ is a vector space and by a slight abuse of notation we will denote by $q$ both an element of $Q$ and the vector of its coordinates $q=(q^1,\ldots,q^n)$ in a local chart on $Q$. It will be clear from the context which definition is invoked. Consider the Lagrangian $L:TQ \longrightarrow \mathbb{R}$ given by

\begin{equation}
\label{eq: Linear Lagrangian Intrinsic}
L(v_q)=\langle \alpha, v_q \rangle - H(q),
\end{equation}

\noindent
where $\alpha: Q \longrightarrow T^*Q$ is a smooth one-form, $H:Q \longrightarrow \mathbb{R}$ is the Hamiltonian, and $v_q \in T_q Q$. Let $(q^\mu, \dot q^\mu)$ denote canonical coordinates on $TQ$, where $\mu=1,\ldots,n$. In these coordinates we can consider

\begin{equation}
\label{eq: Linear Lagrangian in Coordinates}
L(q,\dot q)=\alpha_\mu (q) \, \dot q^\mu - H(q),
\end{equation} 

\noindent
where summation over repeated Greek indices is implied.

\subsection{Equations of motion}
\label{sec: Equations of motion}

The Lagrangian \eqref{eq: Linear Lagrangian Intrinsic} is degenerate, since the associated Legendre transform 

\begin{equation}
\label{eq: Legendre Transform}
\mathbb{F}L:TQ \ni v_q \longrightarrow \alpha_q \in T^*Q
\end{equation}

\noindent
is not invertible. The local representation of the Legendre transform is

\begin{align}
\label{eq: Legendre Transform in coordinates}
\mathbb{F}L(q^\mu, \dot q^\mu) = \bigg(q^\mu, \frac{\partial L}{\partial \dot q^\mu} \bigg) = \big( q^\mu,\alpha_\mu (q) \big),
\end{align}

\noindent
that is,

\begin{equation}
\label{eq: Legendre Transform in coordinates - just momentum}
p_\mu = \alpha_\mu (q),
\end{equation}

\noindent
where $(q^\mu,p_\mu)$ denote canonical coordinates on $T^*Q$. The dynamics is defined by the action functional

\begin{equation}
\label{eq: action}
S[q(t)] = \int_{a}^{b} L\big(q(t),\dot q(t)\big)\,dt
\end{equation}

\noindent 
and Hamilton's principle, which seeks the curves $q(t)$ such that the functional $S[q(t)]$ is stationary under variations of $q(t)$ with fixed endpoints, i.e., we seek $q(t)$ such that

\begin{align}
\label{eq:Hamilton's principle}
dS[q(t)] \cdot \delta q(t)=\frac{d}{d\epsilon} \bigg|_{\epsilon=0}S[q_\epsilon(t)]=0
\end{align}

\noindent
for all $\delta q(t)$ with $\delta q(a)=\delta q(b)=0$, where $q_\epsilon (t)$ is a smooth family of curves satisfying $q_0=q$ and $\frac{d}{d\epsilon} \big|_{\epsilon=0} q_\epsilon = \delta q$. The resulting Euler-Lagrange equations

\begin{equation}
\label{eq:E-L ODE}
M_{\mu \nu}(q) \, \dot q^\nu = \partial_\mu H(q)
\end{equation}

\noindent
form a system of \emph{first-order} ODEs, where we assume that the even-dimensional antisymmetric matrix $M_{\mu\nu}(q) = \partial_\mu \alpha_\nu (q) - \partial_\nu \alpha_\mu (q)$ is invertible for all $q \in Q$. Without loss of generality we can further assume that the coordinate mapping $p_\mu = \alpha_\mu(q)$ is invertible and its inverse is smooth: if the Jacobian $\partial \alpha_\mu / \partial q^\nu$ is singular, we can redefine $\alpha_\mu(q) \rightarrow \alpha_\mu(q) + b_\mu(q^\mu)$, where $b_\mu(q^\mu)$ are arbitrary functions; the Euler-Lagrange equations remain the same, and with the right choice of the functions $b_\mu(q^\mu)$, the redefined Jacobian can be made nonsingular. Using $B=M^{-1}$, Eq.~\eqref{eq:E-L ODE} can be equivalently written as the Poisson system 

\begin{equation}
\label{eq:E-L Poisson}
\dot q^\mu = B^{\mu \nu}(q) \, \partial_\nu H(q).
\end{equation}

The Euler-Lagrange equations \eqref{eq:E-L ODE} can also be formulated as the implicit \textquoteleft Hamiltonian' system

\begin{align}
\label{eq: Hamiltonian DAE}
p_\mu &= \alpha_\mu(q), \nonumber \\
\dot p_\mu &= \partial_\mu \alpha_\nu (q) \, \dot q^\nu - \partial_\mu H(q).
\end{align}

\noindent
Since the Lagrangian $L$ is degenerate, \eqref{eq: Hamiltonian DAE} is an index-1 system of differential-algebraic equations (DAE), rather than a Hamiltonian ODE system: the Legendre transform is an algebraic equation and has to be differentiated once with respect to time in order to turn this system into \eqref{eq:E-L ODE}. This reflects the fact that the evolution of the considered degenerate system takes place on the \emph{primary constraint} $N = \mathbb{F}L(TQ) \subsetneq T^*Q$. It is easy to see that the primary constraint $N$ is (locally) diffeomorphic to the configuration manifold $Q$, where the diffeomorphism $\eta:Q \ni q \longrightarrow \alpha_q \in N$ is locally, in the coordinates on $T^*Q$, given by

\begin{equation}
\label{eq: Q-N diffeomorphism}
\eta(q) = (q,\alpha(q)),
\end{equation}

\noindent
where by a slight abuse of notation $\alpha(q)=(\alpha_1(q),\ldots,\alpha_n(q))$. This shows that $q^\mu$ can also be used as local coordinates on $N$. Note that $\eta$ is simply the restriction of $\alpha$ to $N$, i.e., $\eta = \alpha |_{Q\longrightarrow N}$.

\subsection{Symplectic forms}
\label{sec: Symplectic forms}

The spaces $Q$, $TQ$, $T^*Q$ and $N$ can be equipped with several symplectic or pre-symplectic forms. It is instructive to investigate the relationships between them in order to later avoid confusion regarding the sense in which variational integrators for Lagrangians linear in velocities are symplectic. On the configuration space $Q$ we can define the two-form

\begin{equation}
\label{eq: Symplectic Form on Q}
\Omega = -d \alpha,
\end{equation}

\noindent
which in local coordinates can be expressed as

\begin{equation}
\label{eq: Symplectic Form on Q in coordinates}
\Omega = -d\alpha_\mu \wedge dq^\mu = -M_{\mu\nu}(q) \, dq^\mu \otimes dq^\nu.
\end{equation}

\noindent
The two-form $\Omega$ is symplectic if it is nondegenerate, i.e., if the matrix $M_{\mu\nu}$ is invertible for all $q$. 

The cotangent bundle $T^*Q$ is equipped with the canonical Cartan one-form $\tilde \Theta: T^*Q \longrightarrow T^*T^*Q$, which is intrinsically defined by the formula

\begin{equation}
\label{eq: Cartan form}
\tilde \Theta(\omega) = (\pi_{T^*Q})^* \omega
\end{equation}

\noindent
for any $\omega \in T^*Q$, where $\pi_{T^*Q}: T^*Q \longrightarrow Q$ is the cotangent bundle projection. In canonical coordinates we have

\begin{equation}
\label{eq: Cartan form in coordinates}
\tilde \Theta = p_\mu dq^\mu.
\end{equation}

\noindent
We further have the canonical symplectic two-form

\begin{equation}
\label{eq: Symplectic Form on T*Q}
\tilde \Omega = -d\tilde\Theta = dq^\mu \wedge dp_\mu.
\end{equation}

\noindent
The symplectic forms $\Omega$ and $\tilde \Omega$ are related by

\begin{equation}
\label{eq: Omega vs Omega tilde}
\Omega = \alpha^* \tilde \Omega.
\end{equation}

\noindent
This follows from the simple calculation

\begin{equation}
\label{eq: Theta vs alpha derivation}
\alpha^*\tilde \Theta \cdot v_q = \tilde \Theta(\alpha_q) \cdot T\alpha(v_q) = \alpha_q \cdot T\pi_{T^*Q} \circ T\alpha(v_q) = \alpha_q \cdot T(\pi_{T^*Q} \circ \alpha)(v_q) = \alpha_q \cdot v_q,
\end{equation}

\noindent
where we used \eqref{eq: Cartan form} and the fact that $\pi_{T^*Q} \circ \alpha = \text{id}_Q$. Hence $\alpha^*\tilde \Theta = \alpha$, and taking the exterior derivative on both sides we obtain \eqref{eq: Omega vs Omega tilde}.

Using the Legendre transform \eqref{eq: Legendre Transform} we can define the Lagrangian two-form $\tilde \Omega_L$ on $TQ$ by $\tilde \Omega_L = \mathbb{F}L^*\tilde \Omega$, which in canonical coordinates $(q^\mu,\dot q^\mu)$ is given by

\begin{equation}
\label{eq: Symplectic Form on TQ}
\tilde \Omega_L = dq^\mu \wedge d\alpha_\mu = -M_{\mu\nu}(q) \, dq^\mu \otimes dq^\nu.
\end{equation}

\noindent
The Lagrangian form $\tilde \Omega_L$ is only pre-symplectic, because it is degenerate. Noting that $\mathbb{F}L= \alpha \circ \pi_{TQ}$, where $\pi_{TQ}: TQ \longrightarrow Q$ is the tangent bundle projection, we can relate $\Omega$ and $\tilde \Omega_L$ through the formula

\begin{equation}
\label{eq: Relation between Omega_L and Omega}
\tilde \Omega _L = (\pi_{TQ})^* \alpha^* \tilde \Omega = (\pi_{TQ})^* \Omega.
\end{equation}

The symplectic structure on $N$ can be introduced in two ways: by pushing forward $\Omega$ from $Q$, or pulling back $\tilde \Omega$ from $T^*Q$. Both ways are equivalent

\begin{equation}
\label{eq: Omega_N}
\tilde \Omega_N = \eta_* \Omega = i^* \tilde \Omega,
\end{equation}

\noindent
where $i:N\longrightarrow T^*Q$ is the inclusion map. This follows from the calculation

\begin{equation}
\label{eq: Omega_N derivation}
\eta_* \Omega = (\eta^{-1})^* \alpha^* \tilde \Omega = (\alpha \circ \eta^{-1})^* \tilde \Omega = i^* \tilde \Omega,
\end{equation}

\noindent
where we used $\alpha = i\circ\eta$. If we use $q^\mu$ as coordinates on $N$, then the local representation of $\tilde \Omega_N$ will be given by \eqref{eq: Symplectic Form on Q in coordinates}.

\subsection{Symplectic flows}
\label{sec: Flows}

Let $\varphi_t:Q\longrightarrow Q$ denote the flow of \eqref{eq:E-L ODE} or \eqref{eq:E-L Poisson}. This flow is symplectic on $Q$, that is

\begin{equation}
\label{eq: phi symplectic on Q}
\varphi_t^* \Omega = \Omega.
\end{equation}

\noindent
This fact can be proven by considering the Hamiltonian or Poisson properties of Equation \eqref{eq:E-L ODE} or Equation \eqref{eq:E-L Poisson} (see \cite{HLWGeometric}, \cite{MarsdenRatiuSymmetry}). It also follows directly from the action principle \eqref{eq:Hamilton's principle} (see \cite{RowleyMarsden}). 

Since the Lagrangian \eqref{eq: Linear Lagrangian Intrinsic} is degenerate, the dynamics of the system is defined on $Q$ rather than $TQ$. However, we can obtain the associated flow on $TQ$ through lifting $\varphi_t$ by its tangent map $T\varphi_t: TQ \longrightarrow TQ$. This flow preserves the Lagrangian two-form

\begin{equation}
\label{eq: Tphi symplectic on TQ}
(T\varphi_t)^* \tilde \Omega_L = \tilde \Omega_L.
\end{equation}

\noindent
This can be seen from the calculation

\begin{equation}
\label{eq: Tphi symplectic on TQ derivation}
(T\varphi_t)^* \tilde \Omega_L =(T\varphi_t)^*(\pi_{TQ})^* \Omega=(\pi_{TQ}\circ T\varphi_t)^* \Omega=(\varphi_t \circ \pi_{TQ})^* \Omega=(\pi_{TQ})^*\varphi_t^* \Omega   = \tilde \Omega_L,
\end{equation}

\noindent
where we used \eqref{eq: Relation between Omega_L and Omega}, \eqref{eq: phi symplectic on Q}, and the property $\pi_{TQ}\circ T\varphi_t=\varphi_t \circ \pi_{TQ}$.

The flow $\varphi_t$ induces the flow $\tilde \varphi_t: N \longrightarrow N$ in a natural way as

\begin{equation}
\label{eq: Flow on N}
\tilde \varphi_t = \eta \circ \varphi_t \circ \eta^{-1}.
\end{equation}

\noindent
This flow is symplectic on $N$, i.e., 

\begin{equation}
\label{eq: tilde phi symplectic on N}
\tilde \varphi_t^* \tilde \Omega_N = \tilde \Omega_N,
\end{equation}

\noindent
which can be established through the simple calculation

\begin{equation}
\label{eq: tilde phi symplectic on N derivation}
\tilde \varphi_t^* \tilde \Omega_N = (\eta \circ \varphi_t \circ \eta^{-1})^* \eta_* \Omega = (\eta^{-1})^*\varphi_t^*\eta^*(\eta^{-1})^*\Omega=\eta_*\varphi_t^*\Omega = \tilde \Omega_N,
\end{equation}

\noindent
where we used \eqref{eq: Omega_N} and \eqref{eq: phi symplectic on Q}. The flow $\tilde \varphi_t$ can be interpreted as the symplectic flow for the \textquoteleft Hamiltonian' DAE \eqref{eq: Hamiltonian DAE}.

%
%

\section{Veselov discretization and Discrete Mechanics}
\label{sec: Veselov discretization and Discrete Mechanics}

\subsection{Discrete Mechanics}
\label{sec: Discrete Mechanics}

For a Veselov-type discretization we consider the discrete state space $Q \times Q$, which serves as a discrete approximation of the tangent bundle (see \cite{MarsdenWestVarInt}). We define a discrete Lagrangian $L_d$ as a smooth map $L_d: Q \times Q \longrightarrow \mathbb{R}$ and the corresponding discrete action 

\begin{equation}
\label{eq: Discrete action}
S=\sum_{k=0}^{N-1} L_d(q_k,q_{k+1}).
\end{equation}

\noindent
The variational principle now seeks a sequence $q_0$, $q_1$, $...$, $q_N$ that extremizes $S$ for variations holding the endpoints $q_0$ and $q_N$ fixed. The Discrete Euler-Lagrange equations follow

\begin{equation}
\label{eq:Discrete Euler-Lagrange equations}
D_2L_d(q_{k-1},q_k) + D_1 L_d(q_k,q_{k+1}) = 0.
\end{equation}

\noindent
Assuming that these equations can be solved for $q_{k+1}$, i.e., $L_d$ is non-degenerate, they implicitly define the discrete Lagrangian map $F_{L_d}: Q \times Q \longrightarrow Q \times Q$ such that $F_{L_d}(q_{k-1},q_k)=(q_k,q_{k+1})$. Let $(q^\mu,\bar q^\mu)$ denote local coordinates on $Q\times Q$. We can define the discrete Legendre transforms $\mathbb{F}L_d^+,\mathbb{F}L_d^-:Q\times Q \longrightarrow T^*Q$, which in local coordinates on $Q\times Q$ and $T^*Q$ are respectively given by

\begin{align}
\label{eq: Discrete Legendre Transforms}
\mathbb{F}^+L_d(q,\bar q) &= \big(\bar q, D_2L_d(q,\bar q)\big), \nonumber \\
\mathbb{F}^-L_d(q,\bar q) &= \big(q, -D_1L_d(q,\bar q)\big),
\end{align}

\noindent
where $q=(q^1,\ldots,q^n)$ and $\bar q=(\bar q^1,\ldots,\bar q^n)$. The Discrete Euler-Lagrange equations \eqref{eq:Discrete Euler-Lagrange equations} can be equivalently written as

\begin{equation}
\label{eq:Discrete Euler-Lagrange equations using FL}
\mathbb{F}^+L_d(q_{k-1},q_k)  = \mathbb{F}^-L_d(q_k,q_{k+1}).
\end{equation}

\noindent
Using either of the transforms, one can define the discrete Lagrange two-form on $Q \times Q$ by $\omega_{L_d} = (\mathbb{F}^\pm L_d)^* \tilde \Omega$, which in coordinates gives

\begin{equation}
\label{eq: Discrete Symplectic Form}
\omega_{L_d} = \frac{\partial^2 L_d}{\partial q^\mu \partial \bar q^\nu} dq^\mu \wedge d\bar q^\nu. 
\end{equation}

\noindent
It then follows that the discrete flow $F_{L_d}$ is symplectic, i.e., $F_{L_d}^* \omega_{L_d} = \omega_{L_d}$. Using the Legendre transforms we can pass to the cotangent bundle and define the discrete Hamiltonian map $\tilde F_{L_d}:T^*Q \longrightarrow T^*Q$ by $\tilde F_{L_d} = \mathbb{F}^\pm L_d\circ F_{L_d} \circ (\mathbb{F}^\pm L_d)^{-1}$. This map is also symplectic, i.e., $\tilde F_{L_d}^* \tilde \Omega = \tilde \Omega$.

\subsection{Exact discrete Lagrangian}
\label{sec: Exact discrete Lagrangian}

To relate discrete and continuous mechanics it is necessary to introduce a timestep $h \in \mathbb{R}$. If the continuous Lagrangian $L$ is non-degenerate, it is possible to define a particular choice of discrete Lagrangian which gives an exact correspondence between discrete and continuous systems (see \cite{MarsdenWestVarInt}), the so-called \emph{exact discrete Lagrangian}

\begin{equation}
\label{eq: Exact Discrete Lagrangian}
L^E_d(q,\bar q) = \int_0^h L\big( q_{E}(t), \dot q_{E}(t)\big)\,dt,
\end{equation}

\noindent
where $q_E(t)$ is the solution to the continuous Euler-Lagrange equations associated with $L$ such that it satisfies the boundary conditions $q_E(0)=q$ and $q_E(h)=\bar q$. Note, however, that in the case of a regular Lagrangian the associated Euler-Lagrange equations are second order, therefore boundary value problems are solvable, at least for sufficiently small $h$ and $\bar q$ sufficiently close to $q$. In the case of the Lagrangian \eqref{eq: Linear Lagrangian Intrinsic} the associated Euler-Lagrange equations \eqref{eq:E-L ODE} are first order in time, therefore we have the freedom to choose an initial condition either at $t=0$ or $t=h$, but not both. An exact discrete Lagrangian analogous to \eqref{eq: Exact Discrete Lagrangian} cannot thus be defined on whole $Q \times Q$. We will therefore assume the following definition:

\begin{define}
Let $\Gamma(\varphi_h) = \big\{ \big(q,\varphi_h(q)\big)\in Q\times Q\big\}$ be the graph of $\varphi_h$. The exact discrete Lagrangian $L^E_d:\Gamma(\varphi_h)\longrightarrow \mathbb{R}$ for the Lagrangian \eqref{eq: Linear Lagrangian Intrinsic} is

\begin{equation}
\label{eq: Exact Discrete Lagrangian in Definition}
L^E_d(q,\bar q) = \int_0^h L\big( q_{E}(t), \dot q_{E}(t)\big)\,dt,
\end{equation}

\noindent
where $q_E(t)$ is the solution to \eqref{eq:E-L ODE} that satisfies the initial condition $q_E(0)=q$.
\end{define}

\noindent
Note that in this definition we automatically have $q_E(h)=\bar q$.

\subsection{Singular perturbation problem}
\label{sec: Singular perturbation problem}

As mentioned, the purpose of introducing an exact discrete Lagrangian is to establish an exact correspondence between the continuous and discrete systems. For a regular Lagrangian $L$ and its exact discrete Lagrangian $L^E_d$, one can show that the exact discrete Hamiltonian map $\tilde F_{L^E_d}$ is equal to $\tilde \varphi_h$, where $\tilde \varphi_t$ is the symplectic flow for the Hamiltonian system associated with $L$. The problem is that the exact discrete Lagrangian \eqref{eq: Exact Discrete Lagrangian in Definition} is not defined on the whole space $Q\times Q$, so the discrete Euler-Lagrange equations \eqref{eq:Discrete Euler-Lagrange equations} do not make sense, and it is not entirely clear how to define the associated discrete Lagrangian map $F_{L^E_d}$. One possible way to deal with this issue is to consider a singular perturbation problem. Assume that $Q$ is a Riemannian manifold equipped with the nondegenerate scalar product $\llangle.,.\rrangle$. Define the $\epsilon$-regularized Lagrangian

\begin{equation}
\label{eq: Epsilon-regularized Lagrangian Intrinsic}
L^\epsilon(v_q)=\frac{\epsilon}{2} \llangle v_q,v_q \rrangle + \langle \alpha, v_q \rangle - H(q),
\end{equation}

\noindent
or in coordinates

\begin{equation}
\label{eq: Epsilon-regularized Lagrangian in coordinates}
L^\epsilon(q,\dot q)=\frac{\epsilon}{2} \, g_{\mu\nu} \,\dot q^\mu \,\dot q^\nu + \alpha_\mu (q) \, \dot q^\mu - H(q),
\end{equation}

\noindent
where $g_{\mu\nu}$ denotes the local coordinates of the metric tensor. Without loss of generality assume that in the chosen coordinates $g_{\mu \mu}=1$ and $g_{\mu \nu}=0$ if $\mu \not = \nu$. For $\epsilon>0$ this Lagrangian is nondegenerate and the Legendre transform $\mathbb{F}L^\epsilon:TQ \longrightarrow T^*Q$ is given by

\begin{align}
\label{eq: Regularized Legendre Transform in coordinates}
\mathbb{F}L(q^\mu,\dot q^\mu)= \big( q^\mu,g_{\mu \nu}\,\dot q^\nu +  \alpha_\mu (q) \big),
\end{align}

\noindent
that is,

\begin{align}
\label{eq: Regularized Legendre Transform in coordinates - just momentum}
p_\mu = \epsilon\, g_{\mu \nu}\,\dot q^\nu +  \alpha_\mu (q).
\end{align}

\noindent
The Euler-Lagrange equations

\begin{equation}
\label{eq:Regularized E-L ODE}
\epsilon \, g_{\mu \nu} \, \ddot q^\nu =  M_{\mu \nu}(q) \, \dot q^\nu - \partial_\mu H(q)
\end{equation}

\noindent
are second order. The corresponding Hamiltonian equations (in implicit form) are

\begin{align}
\label{eq: Regularized Hamiltonian ODE}
p_\mu &= \epsilon\, g_{\mu \nu}\,\dot q^\nu + \alpha_\mu(q), \nonumber \\
\dot p_\mu &= \partial_\mu \alpha_\nu (q) \, \dot q^\nu - \partial_\mu H(q).
\end{align}

\noindent
There is no reason to expect that the solutions of \eqref{eq:Regularized E-L ODE} or \eqref{eq: Regularized Hamiltonian ODE} unconditionally approximate the solutions of \eqref{eq:E-L ODE} or \eqref{eq: Hamiltonian DAE}, respectively. The equations \eqref{eq: Regularized Hamiltonian ODE} form a system of first-order ordinary differential equations, and therefore it is possible to specify arbitrary initial conditions $q(0)=q_{init}$ and $p(0)=p_{init}$, whereas initial conditions for \eqref{eq: Hamiltonian DAE} have to satisfy the algebraic constraint $p_{init} = \alpha(q_{init})$. Under certain restrictive analytic assumptions, for some singular perturbation problems it is possible to show that, in order to satisfy the initial conditions, the solutions initially develop a steep boundary layer, but then rapidly converge to the solution of the corresponding DAE system (see \cite{HWODE2}). On the other hand, for other singular perturbation problems, when the initial conditions do not satisfy the algebraic constraint, it may happen that the solutions do not converge to the solution of the DAE, but instead rapidly oscillate (see \cite{Lubich}, \cite{Rabier6}). We expect the latter behavior for \eqref{eq: Regularized Hamiltonian ODE}, as will be demonstrated by a simple example in Section~\ref{sec: Example}. Since our main goal here is to show how the notion of a discrete Legendre transform can be introduced for the exact discrete Lagrangian \eqref{eq: Exact Discrete Lagrangian in Definition}, we will make two intuitive, although nontrivial, assumptions. We refer the interested reader to \cite{HWODE2} and \cite{Lubich} for techniques that can be used to prove these statements rigorously.

\begin{assume}
\label{ass: Initial Value Problem Assumption}
Let $\big(q(t),p(t)\big)$ and $\big(q^\epsilon(t),p^\epsilon(t)\big)$ be the unique smooth solutions of \eqref{eq: Hamiltonian DAE} and \eqref{eq: Regularized Hamiltonian ODE} on the interval $[0,T]$ satisfying the initial conditions $q(0)=q_{init}$, $q^\epsilon(0)=q_{init}$ and $p^\epsilon(0)=p_{init}$, where $p_{init}=\alpha(q_{init})$. Then $q^\epsilon(t) \longrightarrow q(t)$, $p^\epsilon(t) \longrightarrow p(t)$ and $\dot q^\epsilon(t) \longrightarrow \dot q(t)$, $\dot p^\epsilon(t) \longrightarrow \dot p(t)$ uniformly on $[0,T]$ as $\epsilon \longrightarrow 0^+$.
\end{assume}

\begin{assume}
\label{ass: Boundary Value Problem Assumption}
Let $q(t)$ be the unique smooth solution of \eqref{eq:E-L ODE} on the interval $[0,T]$ satisfying the initial condition $q(0)=q_{init}$ and let $q^\epsilon(t)$ be the unique smooth solution of \eqref{eq:Regularized E-L ODE} on the interval $[0,T]$ satisfying the boundary conditions $q^\epsilon(0)=q_{init}$, $q^\epsilon(T)=q_{final}$, where $q_{final}=q(T)$. Then $q^\epsilon(t) \longrightarrow q(t)$ and $\dot q^\epsilon(t) \longrightarrow \dot q(t)$ uniformly on $[0,T]$ as $\epsilon \longrightarrow 0^+$.
\end{assume}

\noindent
With these assumption one can easily see that

\begin{equation}
\label{eq: Exact Discrete Lagrangian as a limit}
L^E_d(q,\bar q) = \lim_{\epsilon\rightarrow 0^+} L^{\epsilon,E}_d(q,\bar q),
\end{equation}

\noindent
where $L^{\epsilon,E}_d$ is the exact discrete Lagrangian for \eqref{eq: Epsilon-regularized Lagrangian Intrinsic}.

\subsection{Exact discrete Legendre transform}
\label{sec: Exact discrete Legendre Transform}

Since $L^\epsilon$ is regular, $L^{\epsilon,E}_d$ is properly defined on the whole space $Q\times Q$ (or at least in a neighborhood of $\Gamma(\varphi_h)$) and the associated exact discrete Legendre transforms satisfy the properties (see \cite{MarsdenWestVarInt})

\begin{align}
\label{eq: Regularized Exact Discrete Legendre Transforms}
\mathbb{F}^+L^{\epsilon,E}_d(q,\bar q) &= \mathbb{F}L^\epsilon\big(q^\epsilon_E(h),\dot q^\epsilon_E(h) \big)=(\bar q, \epsilon \dot{\bar q}^\epsilon+\alpha(\bar q)), \nonumber \\
\mathbb{F}^-L^{\epsilon,E}_d(q,\bar q) &= \mathbb{F}L^\epsilon\big(q^\epsilon_E(0),\dot q^\epsilon_E(0) \big)=(q, \epsilon \dot{q}^\epsilon+\alpha(q)),
\end{align}

\noindent
where $q^\epsilon_E(t)$ is the solution to the regularized Euler-Lagrange equations \eqref{eq:Regularized E-L ODE} satisfying the boundary conditions $q^\epsilon_E(0)=q$ and $q^\epsilon_E(h)=\bar q$, and we denoted $\dot q^\epsilon = \dot q^\epsilon_E(0)$, $\dot{\bar q}^\epsilon = \dot q^\epsilon_E(h)$. In the spirit of \eqref{eq: Exact Discrete Lagrangian as a limit}, we can assume the following definitions of the exact discrete Legendre transforms $\mathbb{F}^{\pm}L^E_d:\Gamma(\varphi_h)\longrightarrow T^*Q$

\begin{align}
\label{eq: Exact Discrete Legendre Transforms as limits}
\mathbb{F}^+L^{E}_d(q,\bar q) &=  \lim_{\epsilon\rightarrow 0^+} \mathbb{F}^+L^{\epsilon,E}_d(q,\bar q) =(\bar q, \alpha(\bar q)), \nonumber \\
\mathbb{F}^-L^{E}_d(q,\bar q) &=  \lim_{\epsilon\rightarrow 0^+} \mathbb{F}^-L^{\epsilon,E}_d(q,\bar q)=(q, \alpha(q)),
\end{align}

\noindent
where $\epsilon \dot{q}^\epsilon \longrightarrow 0$ and $\epsilon \dot{\bar q}^\epsilon \longrightarrow 0$ by uniform convergence of $\dot q^\epsilon_E(t)$. Note that $\mathbb{F}^\pm L^E_d=\alpha \circ \pi^\pm$, where $\pi^+:\Gamma(\varphi_h) \ni (q,\bar q) \longrightarrow \bar q \in Q$ and $\pi^-:\Gamma(\varphi_h) \ni (q,\bar q) \longrightarrow q \in Q$ are projections (both $\pi^\pm$ are diffeomorphisms). This is a close analogy to $\mathbb{F}L=\alpha \circ \pi_{TQ}$ (see Section~\ref{sec: Geometric setup}). We also note the property

\begin{align}
\label{eq: Exact Discrete Legendre Transforms vs Legendre Transform}
\mathbb{F}^+L^E_d(q,\bar q) &= \mathbb{F}L\big(q_E(h),\dot q_E(h) \big), \nonumber \\
\mathbb{F}^-L^E_d(q,\bar q) &= \mathbb{F}L\big(q_E(0),\dot q_E(0) \big),
\end{align}

\noindent
where $q_E(t)$ is the solution of \eqref{eq:E-L ODE} satisfying the initial condition $q_E(0)=q$. This further indicates that our definition of the exact discrete Legendre transforms is sensible. Note that $\mathbb{F}^\pm L^E_d(\Gamma(\varphi_h))=N$. It is convenient to redefine $\mathbb{F}^{\pm}L^E_d:\Gamma(\varphi_h)\longrightarrow N$, that is $\mathbb{F}^\pm L^E_d=\eta \circ \pi^\pm$, so that both transforms are diffeomorphisms between $\Gamma(\varphi_h)$ and $N$.

The discrete Euler-Lagrange equations for $L^E_d$ can be obtained as the limit of the discrete Euler-Lagrange equations for $L^{\epsilon,E}_d$, that is, one can substitute $L^{\epsilon,E}_d$ in \eqref{eq:Discrete Euler-Lagrange equations using FL} and take the limit $\epsilon \longrightarrow 0^+$ on both sides to obtain

\begin{equation}
\label{eq:Discrete Euler-Lagrange equations for the exact discrete Lagrangian}
\mathbb{F}^+L^E_d(q_{k-1},q_k)  = \mathbb{F}^-L^E_d(q_k,q_{k+1}).
\end{equation}

\noindent
This equation implicitly defines the exact discrete Lagrangian map $F_{L^E_d}: \Gamma(\varphi_h) \ni (q_{k-1},q_k) \longrightarrow (q_k,q_{k+1}) \in \Gamma(\varphi_h)$, which, given our definitions, necessarily takes the form $F_{L^E_d}(q_{k-1},q_k)=(q_k,\varphi_h(q_k))$. Using the discrete Legendre transforms $\mathbb{F}^\pm L^E_d$ we can define the corresponding exact discrete \textquoteleft Hamiltonian' map $\tilde F_{L^E_d}: N \longrightarrow N$ as $\tilde F_{L^E_d} = \mathbb{F}^\pm L^E_d\circ F_{L^E_d} \circ (\mathbb{F}^\pm L^E_d)^{-1}$. The simple calculation

\begin{equation}
\label{eq: Exact Discrete Hamiltonian Map}
\tilde F_{L^E_d} = \eta \circ \pi^\pm\circ F_{L^E_d} \circ (\pi^\pm)^{-1}\circ \eta^{-1} = \eta \circ \varphi_h \circ \eta^{-1} = \tilde \varphi_h
\end{equation}

\noindent
shows that the discrete \textquoteleft Hamiltonian' map associated with the exact discrete Lagrangian $L^E_d$ is equal to the \textquoteleft Hamiltonian' flow $\tilde \varphi_h$ for \eqref{eq: Hamiltonian DAE}, i.e., the evolution of the discrete systems described by $L^E_d$ coincides with the evolution of the continuous system described by $L$ at times $t_k = kh$, $k=0,1,2,\ldots$ 

\subsection{Example}
\label{sec: Example}

Let us illustrate these ideas with a very simple example for which analytic solutions are known. Let $Q=\mathbb{R}^2$ and let $(x,y)$ denote local coordinates on $Q$. The tangent bundle is $TQ=\mathbb{R}^2\times \mathbb{R}^2$, and the induced local coordinates are $(x,y,\dot x,\dot y)$. Consider the Lagrangian

\begin{equation}
\label{eq: Example Lagrangian}
L(x,y,\dot x,\dot y)=\frac{1}{2} y \dot x - \frac{1}{2} x \dot y.
\end{equation}

\noindent
The corresponding Euler-Lagrange equations \eqref{eq:E-L ODE} are simply

\begin{align}
\label{eq: Example Euler-Lagrangian Equations}
\dot x &= 0, \nonumber \\
\dot y &= 0, 
\end{align}

\noindent
so the flow $\varphi_t:Q \longrightarrow Q$ is the identity, i.e., $\varphi_t(x,y)=(x,y)$. Let $(x,y,p_x,p_y)$ denote canonical coordinates on the cotangent bundle $T^*Q \cong \mathbb{R}^2\times \mathbb{R}^2$. The Legendre transform is

\begin{equation}
\label{eq: Example Legendre transform}
\mathbb{F}L(x,y,\dot x,\dot y) = \Big(x,y,\frac{1}{2} y,-\frac{1}{2} x\Big).
\end{equation}

\noindent
Let $h$ be a timestep. Note $\Gamma(\varphi_h)=\{(x,y,x,y) \,|\,(x,y) \in Q\}$. The exact discrete Lagrangian \eqref{eq: Exact Discrete Lagrangian in Definition} is therefore 

\begin{equation}
\label{eq: Example Exact Discrete Lagrangian}
L^E_d(x,y,x,y) = 0.
\end{equation}

\noindent
Let us now consider the $\epsilon$-regularized Lagrangian

\begin{equation}
\label{eq: Example Regularized Lagrangian}
L^\epsilon(x,y,\dot x,\dot y)=\frac{\epsilon}{2}\dot x^2+\frac{\epsilon}{2}\dot y^2+\frac{1}{2} y \dot x - \frac{1}{2} x \dot y.
\end{equation}

\noindent
The corresponding Euler-Lagrange equations \eqref{eq:Regularized E-L ODE} take the form

\begin{align}
\label{eq: Example Regularized Euler-Lagrangian Equations}
\epsilon \ddot x + \dot y &= 0, \nonumber \\
\epsilon \ddot y + \dot x &= 0. 
\end{align}

\noindent
One can easily verify analytically that

\begin{align}
\label{eq: Example BVP solution for the singular perturbation problem}
x^\epsilon(t) = \frac{1}{2}\bigg[ (x_i+x_f)-(y_f-y_i) \cot \frac{T}{2 \epsilon}\bigg] &+\frac{1}{2}\bigg[ (y_f-y_i)+(x_f-x_i) \cot \frac{T}{2 \epsilon}\bigg] \sin \frac{t}{\epsilon} \nonumber \\
 &- \frac{1}{2}\bigg[ (x_f-x_i)-(y_f-y_i) \cot \frac{T}{2 \epsilon}\bigg] \cos \frac{t}{\epsilon}, \nonumber \\
y^\epsilon(t) = \frac{1}{2}\bigg[ (y_i+y_f)+(x_f-x_i) \cot \frac{T}{2 \epsilon}\bigg] &-\frac{1}{2}\bigg[ (x_f-x_i)-(y_f-y_i) \cot \frac{T}{2 \epsilon}\bigg] \sin \frac{t}{\epsilon} \nonumber \\
 &- \frac{1}{2}\bigg[ (y_f-y_i)+(x_f-x_i) \cot \frac{T}{2 \epsilon}\bigg] \cos \frac{t}{\epsilon},
\end{align}

\noindent
is the solution to \eqref{eq: Example Regularized Euler-Lagrangian Equations} satisfying the boundary conditions $(x^\epsilon(0),y^\epsilon(0))=(x_i,y_i)$ and $(x^\epsilon(T),y^\epsilon(T))=(x_f,y_f)$. Note that if $x_i\not = x_f$ or $y_i\not = y_f$, then as $\epsilon \longrightarrow 0^+$ this solution is rapidly oscillatory and not convergent. However, if $(x_f,y_f)=\varphi_T(x_i,y_i)=(x_i,y_i)$ (cf. Assumption~\ref{ass: Boundary Value Problem Assumption}) then we have

\begin{align}
\label{eq: Example convergent BVP solution}
x^\epsilon(t) &= x_i, \nonumber \\
y^\epsilon(t) &= y_i, 
\end{align}

\noindent
and this solution converges uniformly (in this simple example it is in fact equal) to the solution of \eqref{eq: Example Euler-Lagrangian Equations} with the same initial condition. We can also find an analytic expression for the exact discrete Lagrangian \eqref{eq: Exact Discrete Lagrangian} associated with \eqref{eq: Example Regularized Lagrangian} as

\begin{equation}
\label{eq: Example Regularized Exact Discrete Lagrangian}
L^{\epsilon,E}_d(x,y,\bar x, \bar y) = \frac{\bar x y - x \bar y}{2}+\frac{(\bar x-x)^2+(\bar y-y)^2}{4} \cot \frac{T}{2 \epsilon}.
\end{equation}

\noindent
Restricting the domain to $\Gamma(\varphi_h)$ we get $L^{\epsilon,E}_d(x,y,x,y)=0$, and comparing to \eqref{eq: Example Exact Discrete Lagrangian} we verify that \eqref{eq: Exact Discrete Lagrangian as a limit} indeed holds. The discrete Legendre transforms \eqref{eq: Discrete Legendre Transforms} associated with $L^{\epsilon,E}_d$ take the form

\begin{align}
\label{eq: Example Regularized Exact Discrete Legendre transforms}
\mathbb{F}^+L^{\epsilon,E}_d(x,y,\bar x, \bar y)&=\bigg(\bar x, \bar y, \frac{y}{2} + \frac{\bar x - x}{2}\cot \frac{T}{2 \epsilon}, -\frac{x}{2} + \frac{\bar y - y}{2}\cot \frac{T}{2 \epsilon}\bigg), \nonumber \\
\mathbb{F}^-L^{\epsilon,E}_d(x,y,\bar x, \bar y)&=\bigg(x, y, \frac{\bar y}{2} + \frac{\bar x - x}{2}\cot \frac{T}{2 \epsilon}, -\frac{\bar x}{2} + \frac{\bar y - y}{2}\cot \frac{T}{2 \epsilon}\bigg).
\end{align}

\noindent
Restricting the domain to $\Gamma(\varphi_h)$ and taking the limit $\epsilon \longrightarrow 0^+$ as in \eqref{eq: Exact Discrete Legendre Transforms as limits}, we can define the exact discrete Legendre transforms associated with \eqref{eq: Example Exact Discrete Lagrangian}
 
\begin{align}
\label{eq: Example Exact Discrete Legendre transforms}
\mathbb{F}^+L^E_d(x,y,x,y)&=\Big(x, y, \frac{y}{2}, -\frac{x}{2} \Big), \nonumber \\
\mathbb{F}^-L^E_d(x,y,x,y)&=\Big(x, y, \frac{y}{2}, -\frac{x}{2} \Big).
\end{align} 

\noindent
Comparing with \eqref{eq: Example Legendre transform}, we see that the property \eqref{eq: Exact Discrete Legendre Transforms vs Legendre Transform} is satisfied, which replicates the analogous property for regular Lagrangians.

\subsection{Variational error analysis}
\label{sec: Variational error analysis}

For a given continuous system described by the Lagrangian $L$, a variational integrator is constructed by choosing a discrete Lagrangian $L_d$ which approximates the exact discrete Lagrangian $L^E_d$. We can define the order of accuracy of the discrete Lagrangian in a way similar to that for discrete Lagrangians resulting from regular continuous Lagrangians (see \cite{MarsdenWestVarInt}).

\begin{define}
A discrete Lagrangian $L_d:Q \times Q \longrightarrow \mathbb{R}$ is of order $r$ if there exists an open subset $U \subset Q$ with compact closure and constants $C>0$ and $\bar h>0$ such that

\begin{equation}
\label{eq: Order of a discrete Lagrangian}
\Big|L_d\big(q(0),q(h) \big) - L^E_d\big(q(0),q(h) \big) \Big| \leq C h^{r+1}
\end{equation}

\noindent
for all solutions $q(t)$ of the Euler-Lagrange equations \eqref{eq:E-L ODE} with initial conditions $q(0) \in U$ and for all $h \leq \bar h$.
\end{define}

We will always assume that the discrete Lagrangian $L_d$ is non-degenerate, so that the discrete Euler-Lagrange equations \eqref{eq:Discrete Euler-Lagrange equations} can be solved for $q_{k+1}$. This defines the discrete Lagrangian map $F_{L_d}:Q\times Q \longrightarrow Q\times Q$ and the associated discrete Hamiltonian map $\tilde F_{L_d}: T^*Q \longrightarrow T^*Q$, as in Section~\ref{sec: Discrete Mechanics}. Of particular interest is the rate of convergence of $\tilde F_{L_d}$ to $\tilde \varphi_h$. One usually considers a \emph{local error} (error made after one step) and a \emph{global error} (error made after many steps). We will assume the following definitions, which are appropriate for differential-algebraic systems (see \cite{HLWGeometric}, \cite{HWODE1}, \cite{HWODE2}, \cite{MarsdenWestVarInt}).

\begin{define}
A discrete Hamiltonian map $\tilde F_{L_d}$ is of order $r$ if there exists an open set $U\subset N$ and constants $C>0$ and $\bar h >0$ such that

\begin{equation}
\label{eq: local error}
\big\|\tilde F_{L_d}(q,p)- \tilde \varphi_h(q,p) \big\| \leq C h^{r+1} 
\end{equation}

\noindent
for all $(q,p)\in U$ and $h\leq \bar h$.
\end{define}

\begin{define}
\label{thm: Definition of the order of convergence}
A discrete Hamiltonian map $\tilde F_{L_d}$ is convergent of order $r$ if there exists an open set $U\subset N$ and constants $C>0$, $\bar h >0$ and $\bar T >0$ such that

\begin{equation}
\label{eq: global error}
\big\|(\tilde F_{L_d})^K(q,p)- \tilde \varphi_T(q,p) \big\| \leq C h^{r+1}, 
\end{equation}

\noindent
where $h = T/K$, for all $(q,p)\in U$, $h\leq \bar h$, and $T \leq \bar T$.
\end{define}

\noindent
If the Lagrangian $L$ is regular, then one can show that a discrete Lagrangian $L_d$ is of order $r$ if and only if the corresponding Hamiltonian map $\tilde F_{L_d}$ is of order $r$ (see \cite{MarsdenWestVarInt}). Also, the associated Hamiltonian equations are a set of ordinary differential equations, and under some smoothness assumptions one can show that if $\tilde F_{L_d}$ is of order $r$, then it is also convergent of order $r$ (see \cite{HWODE1}). However, in the case of the Lagrangian \eqref{eq: Linear Lagrangian Intrinsic} it is not true in general---both the order of the discrete Lagrangian and the local order of the discrete Hamiltonian map may be different than the actual global order of convergence (see \cite{HWODE2}, \cite{HLLectureNotes}), as will be demonstrated in Section~\ref{sec: Variational partitioned Runge-Kutta methods}.

\paragraph{Example: Midpoint Rule.} In a simple example we will demonstrate that the variational order of accuracy of a discretization method is unaffected by a degeneracy of a Lagrangian $L$. In order to calculate the order of a discrete Lagrangian $L_d$, we can expand $L_d(q(0),q(h))$ in a Taylor series in $h$ and compare it to the analogous expansion for $L^E_d$. If the two expansions agree up to $r$ terms, then $L_d$ is of order $r$. Expanding $q(t)$ in a Taylor series about $t=0$ and substituting it in \eqref{eq: Exact Discrete Lagrangian in Definition}, we get the expression

\begin{equation}
\label{eq: Series expansion for exact discrete Lagrangian}
L^E_d\big(q(0),q(h) \big) = h L + \frac{h^2}{2} \bigg(\frac{\partial L}{\partial q} \dot q + \frac{\partial L}{\partial \dot q} \ddot q \bigg) + \frac{h^3}{6} \bigg(\frac{\partial L}{\partial q} \ddot q + \frac{\partial L}{\partial \dot q} \dddot q + \dot q^T \frac{\partial^2 L}{\partial q^2} \dot q + 2 \dot q^T \frac{\partial^2 L}{\partial q \partial \dot q} \ddot q  + \ddot q^T \frac{\partial^2 L}{\partial \dot q^2} \ddot q\bigg) + o(h^3),
\end{equation}

\noindent
where we denoted $q=q(0)$, $\dot q = \dot q(0)$, etc., and the Lagrangian L and its derivatives are computed at $(q,\dot q)$. For the Lagrangian \eqref{eq: Linear Lagrangian Intrinsic} the values of $\dot q$, $\ddot q$, $\dddot q$ are determined by differentiating \eqref{eq:E-L ODE} sufficiently many times and substituting the initial condition $q(0)$. Note that in case of regular Lagrangians the value of $\dot q$ is determined by the boundary conditions $q(0)$, $q(h)$, and the higher-order derivatives by differentiating the corresponding Euler-Lagrange equations, but apart from that the expression \eqref{eq: Series expansion for exact discrete Lagrangian} remains qualitatively unaffected.

The \emph{midpoint rule} is an integrator obtained by defining the discrete Lagrangian

\begin{equation}
\label{eq: Midpoint rule discrete Lagrangian}
L_d(q,\bar q) = h L\Big( \frac{q+\bar q}{2}, \frac{\bar q-q}{h}\Big).
\end{equation}

\noindent
Calculating the expansion in $h$ yields

\begin{equation}
\label{eq: Series expansion for the midpoint rule discrete Lagrangian}
L_d\big(q(0),q(h) \big) = h L + \frac{h^2}{2} \bigg(\frac{\partial L}{\partial q} \dot q + \frac{\partial L}{\partial \dot q} \ddot q \bigg) + h^3 \bigg(\frac{1}{4}\frac{\partial L}{\partial q} \ddot q + \frac{1}{6} \frac{\partial L}{\partial \dot q} \dddot q + \frac{1}{8}\dot q^T \frac{\partial^2 L}{\partial q^2} \dot q + \frac{1}{4} \dot q^T \frac{\partial^2 L}{\partial q \partial \dot q} \ddot q  + \frac{1}{8}\ddot q^T \frac{\partial^2 L}{\partial \dot q^2} \ddot q\bigg) + o(h^3).
\end{equation}

\noindent
Comparing this to \eqref{eq: Series expansion for exact discrete Lagrangian} shows that the discrete Lagrangian defined by the midpoint rule is second order regardless of the degeneracy of $L$. However, as mentioned before, if $L$ is degenerate we cannot conclude about the global order of convergence of the corresponding discrete Hamiltonian map. The midpoint rule can be formulated as a Runge-Kutta method, namely the 1-stage Gauss method. We discuss Gauss and other Runge-Kutta methods and their convergence properties in more detail in Section~\ref{sec: Variational partitioned Runge-Kutta methods}. Note that low-order variational integrators for Lagrangians \eqref{eq: Linear Lagrangian Intrinsic} based on the midpoint rule have been studied in \cite{RowleyMarsden} and \cite{VankerschaverLeok} in the context of the dynamics of point vortices.

\section{Variational partitioned Runge-Kutta methods}
\label{sec: Variational partitioned Runge-Kutta methods}

\subsection{VPRK methods as PRK methods for the \textquoteleft Hamiltonian' DAE}
\label{sec: VPRK as PRK for the DAE}
To construct higher-order variational integrators one may consider a class of partitioned Runge-Kutta (PRK) methods. Variational partitioned Runge-Kutta (VPRK) methods for regular Lagrangians are described in \cite{HLWGeometric} and \cite{MarsdenWestVarInt}. In this section we show how VPRK methods can be applied to systems described by Lagrangians such as \eqref{eq: Linear Lagrangian Intrinsic}. As in the case of regular Lagrangians, we will construct an $s$-stage variational partitioned Runge-Kutta integrator for the Lagrangian \eqref{eq: Linear Lagrangian Intrinsic} by considering the discrete Lagrangian

\begin{equation}
\label{eq: Discrete Lagrangian for VPRK}
L_d(q,\bar q) = h\sum_{i=1}^{s} b_i L(Q_i,\dot Q_i),
\end{equation}

\noindent
where the internal stages $Q_i$, $\dot Q_i$, $i=1,\ldots,s$, satisfy the relation

\begin{equation}
\label{eq: Internal stages for VPRK}
Q_i = q +  h\sum_{j=1}^{s} a_{ij} \dot Q_j,
\end{equation}

\noindent
and are chosen so that the right-hand side of \eqref{eq: Discrete Lagrangian for VPRK} is extremized under the constraint

\begin{equation}
\label{eq: Internal stages constraint for VPRK}
\bar q = q +  h\sum_{i=1}^{s} b_i \dot Q_i.
\end{equation}

\noindent
A variational integrator is then obtained by forming the corresponding discrete Euler-Lagrange equations \eqref{eq:Discrete Euler-Lagrange equations}.

\begin{thm}
The $s$-stage variational partitioned Runge-Kutta method based on the discrete Lagrangian \eqref{eq: Discrete Lagrangian for VPRK} with the coefficients $a_{ij}$ and $b_i$ is equivalent to the following partitioned Runge-Kutta method applied to the \textquoteleft Hamiltonian' DAE \eqref{eq: Hamiltonian DAE}:

\begin{subequations}
\label{eq: PRK for DAE}
\begin{align}
\label{eq: PRK for DAE 1}
P^i&= \alpha(Q_i), \phantom{_i)]^T \dot Q_i-DH(Q_i),} \qquad \qquad \qquad i=1,\ldots,s,\\
\label{eq: PRK for DAE 2}
\dot P^i &=  [D\alpha(Q_i)]^T \dot Q_i - DH(Q_i), \qquad \qquad \qquad \! i=1,\ldots,s, \\ 
\label{eq: PRK for DAE 3}
Q_i &= q + h \sum_{j=1}^s a_{ij} \dot Q_j, \phantom{-DH(Q_i),} \qquad \qquad \qquad i=1,\ldots,s, \\
\label{eq: PRK for DAE 4}
P^i &= p + h \sum_{j=1}^s \bar a_{ij} \dot P_j,  \phantom{-DH(Q_i),} \qquad \qquad \qquad \, i=1,\ldots,s,\\
\label{eq: PRK for DAE 5}
\bar q &= q + h \sum_{j=1}^s b_j \dot Q_j,\\
\label{eq: PRK for DAE 6}
\bar p &= p + h \sum_{j=1}^s b_j \dot P_j,
\end{align}
\end{subequations}

\noindent
where the coefficients satisfy the condition

\begin{equation}
\label{eq: Symplecticity condition for VPRK}
b_i \bar a_{ij} + b_j a_{ji} = b_i b_j, \quad \qquad \forall i,j=1,\ldots,s,
\end{equation}

\noindent
and $(q,p)$ denote the current values of position and momentum, $(\bar q,\bar p)$ denote the respective values at the next time step, $D\alpha = (\partial\alpha_\mu / \partial q^\nu)_{\mu,\nu=1,\ldots,n}$, $DH = (\partial H / \partial q^\mu)_{\mu=1,\ldots,n}$, and $Q_i$, $\dot Q_i$, $P^i$, $\dot P^i$ are the internal stages, with $Q_i = (Q^\mu_i)_{\mu=1,\ldots,n}$, and similarly for the others.
\end{thm}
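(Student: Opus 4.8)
\emph{Proof sketch.} The plan is to mimic the classical derivation of VPRK schemes for regular Lagrangians (see \cite{MarsdenWestVarInt}), realizing the discrete Lagrangian \eqref{eq: Discrete Lagrangian for VPRK} as a constrained critical value and then differentiating it. First I would rewrite the defining extremization as an unconstrained critical point of
\begin{equation*}
\Phi = h\sum_{i=1}^s b_i\, L\Big(q + h\textstyle\sum_{j=1}^s a_{ij}\dot Q_j,\ \dot Q_i\Big) + \lambda\cdot\Big(\bar q - q - h\textstyle\sum_{i=1}^s b_i\dot Q_i\Big),
\end{equation*}
where $\lambda\in\mathbb{R}^n$ is the multiplier enforcing the constraint \eqref{eq: Internal stages constraint for VPRK} and $Q_i$ abbreviates $q + h\sum_j a_{ij}\dot Q_j$, so that $L_d(q,\bar q)$ equals $\Phi$ evaluated at the critical point in the variables $(\dot Q_1,\ldots,\dot Q_s,\lambda)$. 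Introducing the stage quantities
\begin{equation*}
P^i := \frac{\partial L}{\partial\dot q}(Q_i,\dot Q_i) = \alpha(Q_i), \qquad \dot P^i := \frac{\partial L}{\partial q}(Q_i,\dot Q_i) = [D\alpha(Q_i)]^T\dot Q_i - DH(Q_i)
\end{equation*}
then immediately gives \eqref{eq: PRK for DAE 1} and \eqref{eq: PRK for DAE 2}, while \eqref{eq: PRK for DAE 3} and \eqref{eq: PRK for DAE 5} are just the stage relation \eqref{eq: Internal stages for VPRK} and the constraint \eqref{eq: Internal stages constraint for VPRK}.

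The remaining equations I would extract from the critical-point conditions for $\Phi$. Using $Q_k = q + h\sum_j a_{kj}\dot Q_j$, the stationarity condition $\partial\Phi/\partial\dot Q_i=0$ becomes, after dividing by $h$,
\begin{equation*}
b_i P^i + h\sum_{k=1}^s b_k a_{ki}\,\dot P^k = b_i\lambda, \qquad i=1,\ldots,s.
\end{equation*}
By the envelope property (the internal variables sit at a critical point of $\Phi$), differentiating $L_d$ in its two slots yields $D_2 L_d(q,\bar q)=\lambda$ and $D_1 L_d(q,\bar q)=h\sum_i b_i\dot P^i-\lambda$, so the discrete Legendre transforms \eqref{eq: Discrete Legendre Transforms} assign the momenta $\bar p := D_2 L_d(q,\bar q)=\lambda$ at the new point and $p := -D_1 L_d(q,\bar q)=\lambda-h\sum_i b_i\dot P^i$ at the old point; subtracting these is precisely \eqref{eq: PRK for DAE 6}. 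Moreover, the discrete Euler--Lagrange equations \eqref{eq:Discrete Euler-Lagrange equations}, $D_2L_d(q_{k-1},q_k)+D_1L_d(q_k,q_{k+1})=0$, are exactly the statement that this momentum is shared by consecutive steps, so that $(q,p)\mapsto(\bar q,\bar p)$ is a well-defined one-step map.

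Finally, to recover \eqref{eq: PRK for DAE 4} I would substitute $\lambda=\bar p$ and $\bar p = p + h\sum_j b_j\dot P^j$ into the stationarity condition, obtaining $P^i = p + h\sum_j\big(b_j - b_j a_{ji}/b_i\big)\dot P^j$; this is \eqref{eq: PRK for DAE 4} precisely when $b_i\bar a_{ij} = b_i b_j - b_j a_{ji}$, i.e.\ the symplecticity condition \eqref{eq: Symplecticity condition for VPRK}, which is where that hypothesis is used. The converse inclusion---any solution of \eqref{eq: PRK for DAE} satisfying \eqref{eq: Symplecticity condition for VPRK} solves the discrete Euler--Lagrange equations---follows by running the same algebra backwards: \eqref{eq: PRK for DAE 1}--\eqref{eq: PRK for DAE 4} reconstruct the stationarity conditions and \eqref{eq: PRK for DAE 5}--\eqref{eq: PRK for DAE 6} reconstruct the constraint and the matching of momenta across steps.

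The step I expect to be the main obstacle is the bookkeeping around the multiplier $\lambda$: one must justify (via the envelope/implicit-function argument, which relies on the standing nondegeneracy assumption on $L_d$) that $D_1L_d$ and $D_2L_d$ have the stated closed forms, and then correctly identify $\lambda$ with $\bar p$ rather than $p$, so that the discrete Euler--Lagrange equations genuinely reduce to momentum continuity. Once $\lambda=\bar p$ is pinned down, the rest is a direct manipulation in which \eqref{eq: Symplecticity condition for VPRK} serves as exactly the identity needed to convert between the ``$a_{ij}$'' form of the internal stage relation and the ``$\bar a_{ij}$'' form appearing in \eqref{eq: PRK for DAE 4}.
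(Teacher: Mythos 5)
Your proposal is correct and follows essentially the same route as the paper, which simply defers to Theorem~VI.6.4 of Hairer--Lubich--Wanner and Theorem~2.6.1 of Marsden--West and notes that the argument carries over verbatim to the degenerate case; your sketch is precisely that classical derivation (constrained extremization, envelope computation of $D_1L_d$ and $D_2L_d$, identification $\lambda=\bar p$, and the symplecticity condition used to produce the $\bar a_{ij}$ form of the momentum stage relation), with the algebra checking out.
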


\begin{proof}
See Theorem~VI.6.4 in \cite{HLWGeometric} or Theorem~2.6.1 in \cite{MarsdenWestVarInt}. The proof is essentially identical. The only qualitative difference is the fact that in our case the Lagrangian \eqref{eq: Linear Lagrangian Intrinsic} is degenerate, so the corresponding Hamiltonian system is in fact the index-1 differential-algebraic system \eqref{eq: Hamiltonian DAE} rather than a typical system of ordinary differential equations.\\
\end{proof}

\noindent
\paragraph{Existence and uniqueness of the numerical solution.} Given $q$ and $p$, one can use Equations~\eqref{eq: PRK for DAE} to compute the new position $\bar q$ and momentum $\bar p$. First, one needs to solve \eqref{eq: PRK for DAE 1}-\eqref{eq: PRK for DAE 4} for the internal stages $Q_i$, $\dot Q_i$, $P^i$, and $\dot P^i$. This is a system of $4sn$ equations for $4sn$ variables, but one has to make sure these equations are independent, so that a unique solution exists. One may be tempted to calculate the Jacobian of this system for $h=0$, and then use the Implicit Function Theorem. However, even if we start with consistent initial values $(q_0,p_0)$, the numerical solution $(q_k,p_k)$ for $k>0$ will only approximately satisfy the algebraic constraint; so $Q_i=q$ and $P^i=p$ cannot be assumed to be the solution of \eqref{eq: PRK for DAE 1}-\eqref{eq: PRK for DAE 4} for $h=0$, and consequently, the Implicit Function Theorem will not yield a useful result. Let us therefore regard $q$ and $p$ as $h$-dependent, as they result from the previous iterations of the method with the timestep $h$. If the method is convergent, it is reasonable to expect that $p-\alpha(q)$ is small and converges to zero as $h$ is refined. The following approach was inspired by Theorem~4.1 in \cite{HLLectureNotes}.

\begin{thm}
\label{thm: Existence of the numerical solution for PRK}
Let $H$ and $\alpha$ be smooth in an $h$-independent neighborhood $U$ of $q$ and let the matrix 

\begin{equation}
\label{eq: W matrix definition}
W(\xi_1,\ldots,\xi_s)=(\mathcal{\bar{A}}\otimes I_n) \{D\alpha^T\}-(\mathcal{A}\otimes I_n) \{D\alpha\}
\end{equation}

\noindent
be invertible with the inverse bounded in $U^s$, i.e., there exists $C>0$ such that

\begin{equation}
\label{eq: Invertibility and boundedness assumption for PRK}
\big\| W^{-1}(\xi_1,\ldots,\xi_s) \big\| \leq C, \qquad\qquad \forall (\xi_1,\ldots,\xi_s) \in U^s,
\end{equation}

\noindent
where $\mathcal{A}=(a_{ij})_{i,j=1,\ldots,s}$, $\mathcal{\bar A}=(\bar{a}_{ij})_{i,j=1,\ldots,s}$, $I_n$ is the $n\times n$ identity matrix, and $\{D\alpha\}$ denotes the block diagonal matrix

\begin{equation}
\label{eq: Block diagonal Da in terms of xi}
\{D\alpha \}(\xi_1,\ldots,\xi_s)=\bigoplus_{i=1}^s D\alpha(\xi_i) = \textrm{\emph{blockdiag}}\,\big(D\alpha(\xi_1),\ldots,D\alpha(\xi_s) \big).
\end{equation}

\noindent
Suppose also that $(q,p)$ satisfy

\begin{equation}
\label{eq: O(h) assumption for PRK}
p-\alpha(q) = O(h).
\end{equation}
 
\noindent
Then there exists $\bar h>0$ such that the nonlinear system \eqref{eq: PRK for DAE 1}-\eqref{eq: PRK for DAE 4} has a solution for $h \leq \bar h$. The solution is locally unique and satisfies

\begin{equation}
\label{eq: Estimates hypothesis in PRK}
Q_i-q=O(h), \quad\quad P^i-p=O(h), \quad\quad \dot Q_i = O(1), \quad\quad \dot P^i=O(1).
\end{equation}
\end{thm}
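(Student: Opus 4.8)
The plan is to eliminate all but the stage derivatives and reduce \eqref{eq: PRK for DAE 1}--\eqref{eq: PRK for DAE 4} to a single nonlinear system for $u=(\dot Q_1,\ldots,\dot Q_s)\in\mathbb{R}^{sn}$, then to factor out the timestep so that the linearization at $h=0$ becomes exactly the matrix $W$. Concretely, \eqref{eq: PRK for DAE 3} gives $Q_i=q+h\sum_j a_{ij}\dot Q_j$, \eqref{eq: PRK for DAE 2} gives $\dot P^i=[D\alpha(Q_i)]^T\dot Q_i-DH(Q_i)$, and \eqref{eq: PRK for DAE 1} gives $P^i=\alpha(Q_i)$; substituting all three into \eqref{eq: PRK for DAE 4} leaves, for $i=1,\ldots,s$,
\[
\Phi_i(u;h):=\alpha\Big(q+h\textstyle\sum_j a_{ij}\dot Q_j\Big)-p-h\sum_{j=1}^s\bar a_{ij}\Big([D\alpha(Q_j)]^T\dot Q_j-DH(Q_j)\Big)=0 .
\]

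The key observation is that $\Phi_i(u;h)$ carries an overall factor $h$. Writing $\alpha(Q_i)-p=\big(\alpha(Q_i)-\alpha(q)\big)+\big(\alpha(q)-p\big)$, the first term equals $h\,\big(\int_0^1 D\alpha(q+th\sum_j a_{ij}\dot Q_j)\,dt\big)\sum_j a_{ij}\dot Q_j$ by the fundamental theorem of calculus, and the second is $h\rho$ with $\rho:=\big(\alpha(q)-p\big)/h$ bounded by hypothesis \eqref{eq: O(h) assumption for PRK}. Hence $\Psi(u;h):=\Phi(u;h)/h$ extends to a smooth map, and at $h=0$ it reduces, in block form, to the \emph{linear} system
\[
W(q,\ldots,q)\,u \;=\; \rho_{\mathrm{bl}}+c(q),
\]
where $W$ is the matrix \eqref{eq: W matrix definition}, $\rho_{\mathrm{bl}}$ is the block vector with every block equal to $\rho$, and $c(q)$ is the bounded block vector whose $i$-th block is $\big(\sum_j\bar a_{ij}\big)DH(q)$. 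Since $W(q,\ldots,q)$ is invertible with $\|W^{-1}\|\le C$, this has the unique solution $u_0=W^{-1}(q,\ldots,q)\big(\rho_{\mathrm{bl}}+c(q)\big)$, and $\|u_0\|$ is bounded independently of $h$ because $\overline U$ is compact, $\alpha$ and $H$ are smooth on $U$, and $\|\rho\|$ is controlled by \eqref{eq: O(h) assumption for PRK}.

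To conclude I would solve $\Psi(u;h)=0$ near $u_0$ by a quantitative implicit function / contraction argument. For $u$ in a fixed ball around $u_0$ and $h$ small, the stage values $Q_j=q+h\sum_k a_{jk}\dot Q_k$ remain in the $h$-independent neighborhood $U$, and a direct differentiation shows $\partial_u\Psi(u;h)=-W(q,\ldots,q)+O(h)$ uniformly; the bound $\|W^{-1}\|\le C$ on $U^s$ then makes $\partial_u\Psi$ boundedly invertible once $h\le\bar h$, which yields a locally unique solution $u(h)=u_0+O(h)$, so in particular $\dot Q_i=O(1)$. Back-substituting gives $Q_i-q=h\sum_j a_{ij}\dot Q_j=O(h)$, then $\dot P^i=[D\alpha(Q_i)]^T\dot Q_i-DH(Q_i)=O(1)$, and $P^i=\alpha(Q_i)=\alpha(q)+O(h)=p+O(h)$ by \eqref{eq: O(h) assumption for PRK} --- precisely the estimates \eqref{eq: Estimates hypothesis in PRK}.

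The main obstacle is obtaining these bounds \emph{uniformly in} $h$: since $(q,p)$ are themselves produced by earlier steps and only $p-\alpha(q)=O(h)$ is known, with no regularity of $q(h),p(h)$ in $h$, one cannot apply the implicit function theorem naively with $h$ as the parameter. The remedy is to treat $\rho=\big(\alpha(q)-p\big)/h$ as an independent parameter ranging over a fixed bounded set, run the implicit function theorem in the variables $(u;h,\rho)$ near $(u_0;0,\rho)$, and then use compactness (of $\overline U$ and of the $\rho$-range) together with the \emph{uniform} bound \eqref{eq: Invertibility and boundedness assumption for PRK} on $W^{-1}$ --- rather than mere pointwise invertibility --- to extract a single threshold $\bar h$ and constants valid along the whole trajectory.
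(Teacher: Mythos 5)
Your proposal is correct, but it takes a genuinely different route from the paper's proof. You eliminate everything except the stage derivatives $u=(\dot Q_1,\ldots,\dot Q_s)$, pull out the overall factor of $h$ so that the rescaled residual $\Psi=\Phi/h$ extends smoothly to $h=0$ with Jacobian $-W(q,\ldots,q)+O(h)$, and then run a quantitative implicit-function/contraction argument around the explicit leading-order solution $u_0=W^{-1}\bigl(\rho_{\mathrm{bl}}+c(q)\bigr)$, with the consistency defect $\rho=(\alpha(q)-p)/h$ promoted to an independent parameter ranging over a compact set. The paper performs the same elimination (Eq.~\eqref{eq: System for dot Qi and dot Pi}) but establishes existence by homotopy continuation instead: it embeds the system in a $\tau$-family \eqref{eq: Homotopy for the system for dot Qi and dot Pi} admitting the trivial solution at $\tau=0$, differentiates in $\tau$ to get the ODE \eqref{eq: ODE system for dot Qi only using W} with coefficient matrix $W+O(\delta)+O(h)$, and integrates to $\tau=1$ while checking the stages remain in $U$; local uniqueness then needs the separate linearization argument \eqref{eq: Linearization for local uniqueness}--\eqref{eq: Estimate for Delta dot Q}. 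Both arguments rest on exactly the same ingredients: the uniform bound \eqref{eq: Invertibility and boundedness assumption for PRK} on $W^{-1}$, smoothness of $\alpha$ and $H$ near the compact closure of $U$, and the hypothesis \eqref{eq: O(h) assumption for PRK}, which keeps the $\tfrac{1}{h}\bigl(p-\alpha(q)\bigr)$ contribution bounded. Your version buys existence and local uniqueness in a single stroke and identifies $\dot Q_i$ to leading order via the limiting linear system $Wu=\rho_{\mathrm{bl}}+c(q)$; your device of treating $(q,\rho)$ as parameters over compact sets is exactly the right way to extract one threshold $\bar h$ valid along the whole trajectory, a uniformity issue the paper addresses only implicitly through \eqref{eq: Invertibility and boundedness assumption for PRK}. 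The paper's continuation argument, in exchange, needs no explicit division of the residual by $h$ and no a priori initial guess for a contraction.
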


\begin{proof}
Substitute \eqref{eq: PRK for DAE 3} and \eqref{eq: PRK for DAE 4} in \eqref{eq: PRK for DAE 1} and \eqref{eq: PRK for DAE 2} to obtain

\begin{align}
\label{eq: System for dot Qi and dot Pi}
0 &= \alpha(Q_i) - p  - h \sum_{j=1}^s \bar a_{ij} \dot P^j, \nonumber \\
\dot P^i &= D\alpha^T(Q_i) \dot Q_i - DH(Q_i),
\end{align}

\noindent
for $i=1,\ldots,s$, where for notational convenience we left the $Q_i$'s as arguments of $\alpha$, $D\alpha^T$ and $DH$, but we keep in mind they are defined by \eqref{eq: PRK for DAE 3}, so that \eqref{eq: System for dot Qi and dot Pi} is a nonlinear system for $\dot Q_i$ and $\dot P^i$. Let us consider the homotopy

\begin{align}
\label{eq: Homotopy for the system for dot Qi and dot Pi}
0 &= \alpha(Q_i) - p  - h \sum_{j=1}^s \bar a_{ij} \dot P^j - (\tau-1) \big( p-\alpha(q) \big), \nonumber \\
\dot P^i &= D\alpha^T(Q_i) \dot Q_i - DH(Q_i) - (\tau -1) DH(q),
\end{align}

\noindent
for $i=1,\ldots,s$. It is easy to see that for $\tau=0$ the system \eqref{eq: Homotopy for the system for dot Qi and dot Pi} has the solution $\dot Q_i = 0$ and $\dot P^i=0$, and for $\tau=1$ it is equivalent to \eqref{eq: System for dot Qi and dot Pi}. Let us treat $\dot Q_i$ and $\dot P^i$ as functions of $\tau$, and differentiate \eqref{eq: Homotopy for the system for dot Qi and dot Pi} with respect to this parameter. The resulting ODE system can be written as

\begin{subequations}
\label{eq: ODE system for dot Qi and dot Pi}
\begin{align}
\label{eq: ODE system for dot Qi and dot Pi 1}
\{D\alpha\} (\mathcal{A}\otimes I_n) \frac{d \dot Q}{d\tau} - \mathcal{\bar{A}}\otimes I_n \frac{d \dot P}{d\tau} = \frac{1}{h}\mathbbm{1}_s\otimes\big(p-\alpha(q)\big), \\
\label{eq: ODE system for dot Qi and dot Pi 2}
\frac{d\dot P}{d\tau} = \Big(\{ D\alpha^T\} + h \{B\}(\mathcal{A}\otimes I_n)\Big)\frac{d \dot Q}{d\tau} - \mathbbm{1}_s\otimes DH(q),
\end{align}
\end{subequations}

\noindent
where for compactness we introduced the following notations: $\dot Q = (\dot Q_1, \ldots, \dot Q_s)^T$, similarly for $\dot P$; $\mathbbm{1}_s = (1,\ldots,1)^T$ is the $s$-dimensional vector of ones; $\{D\alpha \}=\{D\alpha \}(Q_1,\ldots,Q_s)$, and similarly, $\{B\}$ denotes the block diagonal matrix

\begin{equation}
\label{eq: Block diagonal B}
\{B\} = \text{blockdiag}\,\big(B(Q_1,\dot Q_1),\ldots,B(Q_s,\dot Q_s) \big)
\end{equation}

\noindent
with $B(Q_i,\dot Q_i)=D^2\alpha_\beta(Q_i) \dot Q^\beta_i - D^2 H(Q_i)$, where $D^2$ denotes the Hessian matrix of the respective function, and summation over $\beta$ is implied. The system \eqref{eq: ODE system for dot Qi and dot Pi} is further simplified if we substitute \eqref{eq: ODE system for dot Qi and dot Pi 2} in \eqref{eq: ODE system for dot Qi and dot Pi 1}. This way we obtain an ODE system for the variables $\dot Q$ of the form

\begin{align}
\label{eq: ODE system for dot Qi only}
\Big[ (\mathcal{\bar{A}}\otimes I_n)\{D\alpha^T\} - \{D\alpha\} (\mathcal{A}\otimes I_n) + h(\mathcal{\bar{A}}&\otimes I_n)\{B\}(\mathcal{A}\otimes I_n) \Big] \frac{d \dot Q}{d\tau} = \nonumber \\
&(\mathcal{\bar{A}} \mathbbm{1}_s)\otimes DH(q) - \frac{1}{h}\mathbbm{1}_s\otimes\big(p-\alpha(q)\big).
\end{align}

\noindent
Since $\alpha$ is smooth, we have

\begin{equation}
\label{eq: Switching the order of matrix multiplication}
\Big[ \{D\alpha\} (\mathcal{A}\otimes I_n) \Big]_{ij} = a_{ij} D\alpha(Q_i) = a_{ij} D\alpha(Q_j) + O(\delta) = \Big[ (\mathcal{A}\otimes I_n) \{D\alpha\} \Big]_{ij} + O(\delta),
\end{equation}

\noindent
where $\| Q_i-Q_j \| \leq \delta$ for $\delta$ assumed small, but independent of $h$. Moreover, since $\alpha$ and $H$ are smooth, the term $\{B\}$, as a function of $\dot Q$, is bounded in a neighborhood of 0. Therefore, we can write \eqref{eq: ODE system for dot Qi only} as 

\begin{align}
\label{eq: ODE system for dot Qi only using W}
\Big[ W(Q_1,\ldots,Q_s) + O(\delta) + O(h) \Big] \frac{d \dot Q}{d\tau} = (\mathcal{\bar{A}} \mathbbm{1}_s)\otimes DH(q) - \frac{1}{h}\mathbbm{1}_s\otimes\big(p-\alpha(q)\big).
\end{align}

\noindent
By \eqref{eq: Invertibility and boundedness assumption for PRK}, for sufficiently small $h$ and $\delta$, the matrix $W(Q_1,\ldots,Q_s) + O(\delta) + O(h)$ has a bounded inverse, provided that $Q_1,\ldots,Q_s$ remain in $U$. Therefore, the ODE \eqref{eq: ODE system for dot Qi only using W} with the initial condition $\dot Q(0) = 0$ has a unique solution $\dot Q (\tau)$ on a non-empty interval $[0,\bar \tau)$, which can be extended until any of the corresponding $Q_i(\tau)$ leaves $U$. Let us argue that for a sufficiently small $h$ we have $\bar \tau>1$. Given \eqref{eq: Invertibility and boundedness assumption for PRK} and \eqref{eq: O(h) assumption for PRK}, the ODE \eqref{eq: ODE system for dot Qi only using W} implies that

\begin{equation}
\label{eq: d/dtau dotQ is O(1)}
\frac{d \dot Q}{d \tau} = O(1).
\end{equation}

\noindent
Therefore, we have

\begin{equation}
\label{eq: dotQ is O(tau)}
\dot Q(\tau) = \int_0^\tau \frac{d \dot Q}{d \zeta} \, d\zeta = O(\tau)
\end{equation}

\noindent
and further

\begin{equation}
\label{eq: Q is O(tau h)}
Q_i(\tau) = q + O(\tau h)
\end{equation}

\noindent
for $\tau < \bar \tau$. This implies that all $Q_i(\tau)$ remain in $U$ for $\tau \leq 1$ if $h$ is sufficiently small. Consequently, the ODE \eqref{eq: ODE system for dot Qi only} has a solution on the interval $[0,1]$. Then $\dot Q_i(1)$ and $Q_i(1)$ satisfy the estimates \eqref{eq: Estimates hypothesis in PRK}, and are a solution to the nonlinear system \eqref{eq: PRK for DAE 1}-\eqref{eq: PRK for DAE 4}. The corresponding $\dot P^i$ and $P^i$ can be computed using \eqref{eq: PRK for DAE 2} and \eqref{eq: PRK for DAE 4}, and the remaining estimates \eqref{eq: Estimates hypothesis in PRK} can be proved using the fact that $\alpha$ and $H$ are smooth. This completes the proof of the existence of a numerical solution to \eqref{eq: PRK for DAE 1}-\eqref{eq: PRK for DAE 4}.

In order to prove local uniqueness, we substitute the second equation of \eqref{eq: System for dot Qi and dot Pi} in the first one to obtain a nonlinear system for $\dot Q_i$, namely

\begin{align}
\label{eq: System for dot Qi}
0 &= \alpha(Q_i) - p  - h \sum_{j=1}^s \bar a_{ij}\big(D\alpha^T(Q_j) \dot Q_j - DH(Q_j) \big),
\end{align} 

\noindent
for $i=1,\ldots,s$, where we again left the $Q_i$'s for notational convenience. Suppose there exists another solution $\dot{ \bar {Q}}_i$ that satisfies the estimates \eqref{eq: Estimates hypothesis in PRK}, and denote $\Delta \dot Q_i = \dot{ \bar {Q}}_i-\dot Q_i$. Based on the assumptions, we have $\Delta \dot Q_i = O(1)$, i.e., it is at least bounded as $h \longrightarrow 0$. We will show that for sufficiently small $h$ we in fact have $\Delta \dot Q_i =0$. Since $\dot{\bar{Q}}_i$ satisfy \eqref{eq: System for dot Qi}, we have

\begin{align}
\label{eq: System for alternate dot Qi}
0 &= \alpha(\bar Q_i) - p  - h \sum_{j=1}^s \bar a_{ij}\big(D\alpha^T(\bar Q_j) \dot {\bar{Q}}_j - DH(\bar Q_j) \big)
\end{align} 

\noindent
for $i=1,\ldots,s$. Subtract \eqref{eq: System for dot Qi} from \eqref{eq: System for alternate dot Qi}, and linearize around $\dot Q_i$. Based on the fact that $\Delta \dot Q_i = O(1)$, and using the notation introduced before, we get

\begin{equation}
\label{eq: Linearization for local uniqueness}
0=h \Big[ \{D\alpha\} (\mathcal{A}\otimes I_n) - (\mathcal{\bar{A}}\otimes I_n)\{D\alpha^T\} \Big] \Delta \dot Q + O(h^2 \|\Delta \dot Q \|).
\end{equation} 

\noindent
By a similar argument as before, for sufficiently small $h$ the matrix $\Big[ \{D\alpha\} (\mathcal{A}\otimes I_n) - (\mathcal{\bar{A}}\otimes I_n)\{D\alpha^T\} \Big]$ has a bounded inverse, therefore \eqref{eq: Linearization for local uniqueness} implies $\Delta \dot Q = O(h \|\Delta \dot Q \|)$, that is,

\begin{equation}
\label{eq: Estimate for Delta dot Q}
\|\Delta \dot Q \| \leq \tilde C h \|\Delta \dot Q \| \qquad \Longleftrightarrow \qquad(1-\tilde C h) \|\Delta \dot Q \| \leq 0
\end{equation}

\noindent
for some constant $\tilde C >0$. Note that for $h<1/\tilde C$ we have $(1-\tilde C h)>0$, and therefore $\|\Delta \dot Q \|=0$, which completes the proof of the local uniqueness of a numerical solution to \eqref{eq: PRK for DAE 1}-\eqref{eq: PRK for DAE 4}.

\end{proof}.

\paragraph{Remarks.} The condition \eqref{eq: Invertibility and boundedness assumption for PRK} may be tedious to verify, especially if one uses a Runge-Kutta method with many stages. However, this condition is significantly simplified in the following special cases:

\begin{enumerate}
\item For a non-partitioned Runge-Kutta method we have $\mathcal{A}=\mathcal{\bar{A}}$, and the condition \eqref{eq: Invertibility and boundedness assumption for PRK} is satisfied if $\mathcal{A}$ is invertible, and the mass matrix $M(q) = D\alpha^T(q) - D\alpha(q)$, as defined in Section~\ref{sec: Equations of motion}, is invertible in $U$ and its inverse is bounded.

\item If $D\alpha$ is antisymmetric, then the condition \eqref{eq: Invertibility and boundedness assumption for PRK} is satisfied if $(\mathcal{A}+\mathcal{\bar{A}})$ is invertible, and the matrix $D\alpha(q)$ is invertible in $U$ and its inverse is bounded.
\end{enumerate}

\subsection{Linear $\alpha_\mu(q)$}
\label{sec: Linear alpha}

An interesting special case is obtained if we have, in some local chart on $Q$, $\alpha_\mu(q) = - \frac{1}{2}\Lambda_{\mu \nu} q^\nu$ for some constant matrix $\Lambda$. Without loss of generality assume that $\Lambda$ is invertible and antisymmetric. The Lagrangian \eqref{eq: Linear Lagrangian in Coordinates} then takes the form

\begin{equation}
\label{eq: Bilinear Lagrangian in coordinates}
L(q,\dot q)=-\frac{1}{2}\Lambda_{\mu \nu} \dot q^\mu q^\nu - H(q),
\end{equation} 

\noindent
the Euler-Lagrange equations \eqref{eq:E-L ODE} become

\begin{equation}
\label{eq:E-L ODE for linear alpha}
\Lambda \dot q = DH(q),
\end{equation}

\noindent
and the \textquoteleft Hamiltonian' DAE system \eqref{eq: Hamiltonian DAE} is

\begin{align}
\label{eq: Hamiltonian DAE for linear alpha}
p &= -\frac{1}{2} \Lambda q, \nonumber \\
\dot p &=\frac{1}{2} \Lambda \dot q - DH(q).
\end{align}

\noindent
Let us consider a special case of the method \eqref{eq: PRK for DAE} with $a_{ij}=\bar a_{ij}$, i.e., a non-partitioned Runge-Kutta method. Applying it to \eqref{eq: Hamiltonian DAE for linear alpha} we get

\begin{subequations}
\label{eq: PRK for DAE for linear alpha}
\begin{align}
\label{eq: PRK for DAE for linear alpha 1}
P^i&= -\frac{1}{2} \Lambda Q_i, \quad \qquad \qquad \qquad \qquad \!\! i=1,\ldots,s,\\
\label{eq: PRK for DAE for linear alpha 2}
\dot P^i &=  \frac{1}{2} \Lambda \dot Q_i - DH(Q_i), \qquad \qquad \:\:\, i=1,\ldots,s, \\ 
\label{eq: PRK for DAE for linear alpha 3}
Q_i &= q + h \sum_{j=1}^s a_{ij} \dot Q_j,  \qquad \qquad \qquad i=1,\ldots,s, \\
\label{eq: PRK for DAE for linear alpha 4}
P^i &= p + h \sum_{j=1}^s a_{ij} \dot P_j,   \qquad \qquad \qquad \, i=1,\ldots,s,\\
\label{eq: PRK for DAE for linear alpha 5}
\bar q &= q + h \sum_{j=1}^s b_j \dot Q_j,\\
\label{eq: PRK for DAE for linear alpha 6}
\bar p &= p + h \sum_{j=1}^s b_j \dot P_j.
\end{align}
\end{subequations}

\noindent
Since $\Lambda$ is antisymmetric and invertible, then by Theorem~\ref{thm: Existence of the numerical solution for PRK} the scheme \eqref{eq: PRK for DAE for linear alpha} yields a unique numerical solution to \eqref{eq: Hamiltonian DAE for linear alpha} if the Runge-Kutta matrix $\mathcal{A}=(a_{ij})$ is invertible.

\begin{thm}
\label{thm: Equivalence with a RK for the ODE}
Suppose $\mathcal{A}=(a_{ij})$ is invertible and $p=-\frac{1}{2}\Lambda q$. Then the method \eqref{eq: PRK for DAE for linear alpha} is equivalent to the same Runge-Kutta method applied to \eqref{eq:E-L ODE for linear alpha}.
\end{thm}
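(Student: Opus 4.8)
The plan is to exploit the fact that, because $\alpha$ is linear, the momentum internal stages $P^i$ are completely slaved to the position internal stages $Q_i$ via the algebraic constraint \eqref{eq: PRK for DAE for linear alpha 1}, just as $p=-\tfrac12\Lambda q$ by hypothesis. The strategy is to eliminate all momentum variables from \eqref{eq: PRK for DAE for linear alpha} and show that what remains is exactly the $s$-stage Runge--Kutta method with coefficients $a_{ij}$, $b_i$ applied to \eqref{eq:E-L ODE for linear alpha}, and in addition to check that the constraint $p=-\tfrac12\Lambda q$ is reproduced one step later so that the equivalence persists over all steps.

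First I would substitute $P^i=-\tfrac12\Lambda Q_i$ and $p=-\tfrac12\Lambda q$ into the momentum-stage update \eqref{eq: PRK for DAE for linear alpha 4}, and use \eqref{eq: PRK for DAE for linear alpha 3} in the form $Q_i-q=h\sum_j a_{ij}\dot Q_j$. Multiplying the latter by $-\tfrac12\Lambda$ and comparing with \eqref{eq: PRK for DAE for linear alpha 4} gives, for each $i$,
\[
h\sum_{j=1}^{s} a_{ij}\Bigl(\dot P^j+\tfrac12\Lambda\dot Q_j\Bigr)=0 .
\]
Since $\mathcal{A}=(a_{ij})$ is invertible (equivalently $\mathcal{A}\otimes I_n$ is), and $h\neq0$, this forces $\dot P^j=-\tfrac12\Lambda\dot Q_j$ for every $j$. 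This is the key step and the only place where the invertibility of $\mathcal{A}$ enters; one must keep track of the block/tensor structure so that "cancelling $\mathcal{A}$" is legitimate componentwise in $\mathbb{R}^n$.

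Next I would insert $\dot P^i=-\tfrac12\Lambda\dot Q_i$ into \eqref{eq: PRK for DAE for linear alpha 2}; the two $\tfrac12\Lambda\dot Q_i$ contributions combine to yield $\Lambda\dot Q_i=DH(Q_i)$, i.e. $\dot Q_i=\Lambda^{-1}DH(Q_i)$. Together with \eqref{eq: PRK for DAE for linear alpha 3} and \eqref{eq: PRK for DAE for linear alpha 5}, these are precisely the stage equations and the update of the Runge--Kutta method applied to $\dot q=\Lambda^{-1}DH(q)$, which is \eqref{eq:E-L ODE for linear alpha}. The converse direction is immediate: given a solution $(q,Q_i,\dot Q_i,\bar q)$ of that Runge--Kutta scheme, setting $P^i:=-\tfrac12\Lambda Q_i$ and $\dot P^i:=-\tfrac12\Lambda\dot Q_i$ and computing $\bar p$ from \eqref{eq: PRK for DAE for linear alpha 6} recovers a solution of \eqref{eq: PRK for DAE for linear alpha}. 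Hence the two schemes generate identical sequences of $Q_i,\dot Q_i,q,\bar q$.

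Finally I would verify that the momentum update \eqref{eq: PRK for DAE for linear alpha 6} is consistent with the algebraic constraint at the new time level: using $p=-\tfrac12\Lambda q$ and $\dot P^j=-\tfrac12\Lambda\dot Q_j$,
\[
\bar p = p+h\sum_{j=1}^{s}b_j\dot P_j = -\tfrac12\Lambda\Bigl(q+h\sum_{j=1}^{s}b_j\dot Q_j\Bigr) = -\tfrac12\Lambda\bar q .
\]
Thus the hypothesis $p=-\tfrac12\Lambda q$ is reproduced at the next step, and an induction on the step index closes the argument. I do not anticipate any genuine obstacle: the entire proof is a short linear elimination, the only care needed being the bookkeeping of the tensor structure in the cancellation step above.
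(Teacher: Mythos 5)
Your proposal is correct and follows essentially the same route as the paper's proof: substitute the stage relations into the algebraic constraint $P^i=-\tfrac12\Lambda Q_i$, use invertibility of $\mathcal{A}$ to deduce $\dot P^i=-\tfrac12\Lambda\dot Q_i$, substitute into the momentum-stage equation to recover $\Lambda\dot Q_i=DH(Q_i)$, and verify $\bar p=-\tfrac12\Lambda\bar q$. Your explicit remarks on the converse direction and the induction over steps are a slight elaboration of what the paper leaves implicit, but the argument is the same.
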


\begin{proof}
Substitute \eqref{eq: PRK for DAE for linear alpha 3} and \eqref{eq: PRK for DAE for linear alpha 4} in \eqref{eq: PRK for DAE for linear alpha 1}, and use the fact $p=-\frac{1}{2}\Lambda q$ to obtain

\begin{equation}
\sum_{j=1}^s a_{ij} \Big(\dot P_j + \frac{1}{2}\Lambda \dot Q_j\Big)=0, \qquad \qquad \qquad \, i=1,\ldots,s.
\end{equation}

\noindent
Since $\mathcal{A}$ is invertible, this implies

\begin{equation}
\label{eq: Pi vs Qi for linear alpha}
\dot P_i =- \frac{1}{2}\Lambda \dot Q_i, \qquad \qquad \qquad \, i=1,\ldots,s.
\end{equation}

\noindent
Substituting this in \eqref{eq: PRK for DAE for linear alpha 2} yields

\begin{equation}
\Lambda \dot Q_i = DH(Q_i), \qquad \qquad \qquad \, i=1,\ldots,s.
\end{equation}

\noindent
Together with \eqref{eq: PRK for DAE for linear alpha 3} and \eqref{eq: PRK for DAE for linear alpha 5}, this gives a Runge-Kutta method for \eqref{eq:E-L ODE for linear alpha}. Moreover, substituting \eqref{eq: Pi vs Qi for linear alpha} and $p=-\frac{1}{2}\Lambda q$ in \eqref{eq: PRK for DAE for linear alpha 6}, and using \eqref{eq: PRK for DAE for linear alpha 5}, one has

\begin{equation}
\bar p = -\frac{1}{2}\Lambda q + h \sum_{j=1}^s b_j \Big( -\frac{1}{2}\Lambda \dot Q_j \Big) = -\frac{1}{2}\Lambda \bar q,
\end{equation}

\noindent
that is, $(\bar q, \bar p)$ satisfy the algebraic constraint.\\
\end{proof}

\begin{corollary}
\label{thm: Invariance of the primary constraint}
The numerical flow on $T^*Q$ defined by \eqref{eq: PRK for DAE for linear alpha} leaves the primary constraint $N$ invariant, i.e., if $(q,p)\in N$, then $(\bar q,\bar p)\in N$.
\end{corollary}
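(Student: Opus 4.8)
The plan is to notice that, in the linear setting $\alpha(q)=-\tfrac12\Lambda q$, the primary constraint $N$ is, in the canonical coordinates on $T^*Q$, exactly the graph $\{(q,p):p=-\tfrac12\Lambda q\}$ via the diffeomorphism $\eta$ of \eqref{eq: Q-N diffeomorphism}. Hence the hypothesis $(q,p)\in N$ is \emph{identical} to the algebraic constraint $p=-\tfrac12\Lambda q$ assumed in Theorem~\ref{thm: Equivalence with a RK for the ODE}, and the conclusion $\bar p=-\tfrac12\Lambda\bar q$ established in the course of that proof is precisely the statement $(\bar q,\bar p)\in N$. So the corollary is immediate from the proof of Theorem~\ref{thm: Equivalence with a RK for the ODE}; I would simply invoke it.

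For a self-contained argument I would recall the short chain of deductions. Starting from $(q,p)$ with $p=-\tfrac12\Lambda q$, substitute the stage relations \eqref{eq: PRK for DAE for linear alpha 3} and \eqref{eq: PRK for DAE for linear alpha 4} into \eqref{eq: PRK for DAE for linear alpha 1}; the contributions of $q$ and $p$ cancel because of the constraint, leaving $\sum_{j}a_{ij}\bigl(\dot P_j+\tfrac12\Lambda\dot Q_j\bigr)=0$ for $i=1,\ldots,s$. Invertibility of $\mathcal{A}$ — which is assumed throughout this subsection to make the numerical flow well defined and unique, cf. Theorem~\ref{thm: Existence of the numerical solution for PRK} — forces $\dot P_i=-\tfrac12\Lambda\dot Q_i$ at every stage. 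Inserting this together with $p=-\tfrac12\Lambda q$ into the momentum update \eqref{eq: PRK for DAE for linear alpha 6} and comparing with the position update \eqref{eq: PRK for DAE for linear alpha 5} gives $\bar p=-\tfrac12\Lambda\bar q=\alpha(\bar q)$, i.e.\ $(\bar q,\bar p)\in N$.

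There is essentially no obstacle: all the work has already been done inside the proof of Theorem~\ref{thm: Equivalence with a RK for the ODE}. The only points that need care are (i) making explicit that $N$ in coordinates is the set $p=-\tfrac12\Lambda q$, so that "being on the constraint" and "satisfying the hypothesis of Theorem~\ref{thm: Equivalence with a RK for the ODE}" coincide, and (ii) noting that the statement presupposes the regime in which the scheme \eqref{eq: PRK for DAE for linear alpha} actually defines a map $(q,p)\mapsto(\bar q,\bar p)$, namely $\mathcal{A}$ invertible and $h$ small enough.
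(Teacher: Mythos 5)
Your proposal is correct and matches the paper exactly: the corollary is stated as an immediate consequence of the final computation in the proof of Theorem~\ref{thm: Equivalence with a RK for the ODE}, which establishes $\bar p=-\tfrac12\Lambda\bar q$ under the hypothesis $p=-\tfrac12\Lambda q$, i.e., precisely the invariance of $N$. Your added remarks about identifying $N$ with the graph of $\alpha$ in coordinates and about the invertibility of $\mathcal{A}$ are consistent with the paper's setup and require no changes.
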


\noindent
If the coefficients of the method \eqref{eq: PRK for DAE for linear alpha} satisfy the condition \eqref{eq: Symplecticity condition for VPRK}, then \eqref{eq: PRK for DAE for linear alpha} is a variational integrator and the associated discrete Hamiltonian map $\tilde F_{L_d}$ is symplectic on $T^*Q$, as explained in Section~\ref{sec: Discrete Mechanics}. Given Corollary~\ref{thm: Invariance of the primary constraint}, we further have:

\begin{corollary}
\label{thm: Symplecticity on the primary constraint}
If the coefficients $a_{ij}$ and $b_i$ in \eqref{eq: PRK for DAE for linear alpha} satisfy the condition \eqref{eq: Symplecticity condition for VPRK}, then the discrete Hamiltonian map $\tilde F_{L_d}$ associated with \eqref{eq: Discrete Lagrangian for VPRK} is symplectic on the primary constraint $N$, that is, $\big(\tilde F_{L_d} \big|_N \big)^* \tilde \Omega_N = \tilde \Omega_N$.
\end{corollary}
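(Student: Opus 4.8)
The plan is to obtain symplecticity on $N$ as a purely formal consequence of symplecticity on the ambient cotangent bundle $T^*Q$ together with the invariance of $N$, using only naturality of the pullback; no new analysis is required beyond what has already been set up. First I would assemble the three ingredients. (i) Under condition~\eqref{eq: Symplecticity condition for VPRK} the discrete Hamiltonian map $\tilde F_{L_d}:T^*Q\longrightarrow T^*Q$ is a well-defined (local) diffeomorphism and is symplectic, $\tilde F_{L_d}^*\tilde\Omega=\tilde\Omega$, as recalled just before the statement. (ii) By Corollary~\ref{thm: Invariance of the primary constraint}, $\tilde F_{L_d}$ maps $N$ into $N$, so that $\tilde F_{L_d}\big|_N:N\longrightarrow N$ is defined. (iii) By \eqref{eq: Omega_N}, $\tilde\Omega_N=i^*\tilde\Omega$ where $i:N\longrightarrow T^*Q$ is the inclusion, and $\tilde\Omega_N$ is nondegenerate since $M_{\mu\nu}$ is invertible by hypothesis, so ``symplectic on $N$'' is meaningful.

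The single computation I would then carry out rests on the observation that invariance of $N$ is exactly the identity $i\circ\big(\tilde F_{L_d}\big|_N\big)=\tilde F_{L_d}\circ i$ of maps $N\longrightarrow T^*Q$, whence
\begin{align*}
\big(\tilde F_{L_d}\big|_N\big)^*\tilde\Omega_N
&=\big(\tilde F_{L_d}\big|_N\big)^*i^*\tilde\Omega
=\big(i\circ\tilde F_{L_d}\big|_N\big)^*\tilde\Omega
=\big(\tilde F_{L_d}\circ i\big)^*\tilde\Omega \\
&=i^*\big(\tilde F_{L_d}^*\tilde\Omega\big)=i^*\tilde\Omega=\tilde\Omega_N,
\end{align*}
which is the assertion. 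As an alternative route I could instead use $\tilde\Omega_N=\eta_*\Omega$ from \eqref{eq: Omega_N} together with Theorem~\ref{thm: Equivalence with a RK for the ODE} to identify $\tilde F_{L_d}\big|_N$ with $\eta\circ\Phi_h\circ\eta^{-1}$, where $\Phi_h:Q\longrightarrow Q$ is the Runge-Kutta map for the first-order system \eqref{eq:E-L ODE for linear alpha} with its constant structure matrix; then the corollary would follow from the classical fact that a Runge-Kutta method whose coefficients satisfy the symplecticity relation~\eqref{eq: Symplecticity condition for VPRK} preserves a constant symplectic form, i.e. from the discrete analogue of \eqref{eq: phi symplectic on Q}.

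I do not expect a real obstacle here: the substance was already done in establishing symplecticity on $T^*Q$ and in Corollary~\ref{thm: Invariance of the primary constraint}. The two points I would take care to state explicitly are that $\tilde F_{L_d}$ is an honest local diffeomorphism of $T^*Q$ — which holds because the discrete Lagrangian \eqref{eq: Discrete Lagrangian for VPRK} is nondegenerate when $\mathcal A$ is invertible, by Theorems~\ref{thm: Existence of the numerical solution for PRK} and~\ref{thm: Equivalence with a RK for the ODE} — and that $\tilde F_{L_d}\big|_N$ is a diffeomorphism \emph{of} $N$ rather than merely a map \emph{into} $N$, which follows since $\tilde F_{L_d}^{-1}$ (the same scheme run with timestep $-h$, whose coefficients again satisfy~\eqref{eq: Symplecticity condition for VPRK}) also leaves $N$ invariant by the argument of Corollary~\ref{thm: Invariance of the primary constraint}. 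All of this is local, on a neighbourhood where $M_{\mu\nu}$ is invertible, which is precisely what is needed for $\tilde\Omega_N$ to be symplectic.
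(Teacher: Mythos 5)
Your proof is correct and is essentially the argument the paper intends: the paper states this as an immediate corollary of the symplecticity of $\tilde F_{L_d}$ on $T^*Q$ together with Corollary~\ref{thm: Invariance of the primary constraint}, and your pullback computation via $\tilde\Omega_N=i^*\tilde\Omega$ and $i\circ\big(\tilde F_{L_d}\big|_N\big)=\tilde F_{L_d}\circ i$ is exactly the omitted calculation. The extra remarks on $\tilde F_{L_d}$ being a local diffeomorphism and on invariance of $N$ under the inverse map are sensible added care but not needed beyond what the paper already establishes.
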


\paragraph{Convergence.} Various Runge-Kutta methods and their classical orders of convergence, that is, orders of convergence when applied to (non-stiff) ordinary differential equations, are discussed in many textbooks on numerical analysis, for instance \cite{HWODE1} and \cite{HWODE2}. When applied to differential-algebraic equations, the order of convergence of a Runge-Kutta method may be reduced (see \cite{PetzoldDAE}, \cite{HWODE2}, \cite{Rabier6}). However, in the case of \eqref{eq: Hamiltonian DAE for linear alpha} Theorem~\ref{thm: Equivalence with a RK for the ODE} implies that the classical order of convergence of non-partitioned Runge-Kutta methods \eqref{eq: PRK for DAE for linear alpha} is retained. 

\begin{thm}
\label{thm: Retention of the classical order of convergence}
A Runge-Kutta method with the coefficients $a_{ij}$ and $b_i$ applied to the DAE system \eqref{eq: Hamiltonian DAE for linear alpha} retains its classical order of convergence.
\end{thm}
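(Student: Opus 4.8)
The plan is to derive the statement as a direct consequence of the step-by-step equivalence in Theorem~\ref{thm: Equivalence with a RK for the ODE}, which reduces the question to the classical convergence theory of Runge--Kutta methods for smooth ordinary differential equations. We take the initial data $(q_0,p_0)$ on the primary constraint $N$, i.e.\ $p_0=-\tfrac12\Lambda q_0$; this is the natural domain of the discrete Hamiltonian map. Since $\mathcal{A}$ is invertible and $M=\Lambda$ is invertible, Theorem~\ref{thm: Existence of the numerical solution for PRK} shows that one step of \eqref{eq: PRK for DAE for linear alpha} is well defined and locally unique for $h$ small enough, and Corollary~\ref{thm: Invariance of the primary constraint} shows that the resulting $(\bar q,\bar p)$ again lies on $N$. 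Hence the consistency hypothesis $p=-\tfrac12\Lambda q$ of Theorem~\ref{thm: Equivalence with a RK for the ODE} is satisfied not only at the first step but at every step of the iteration.

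First I would apply Theorem~\ref{thm: Equivalence with a RK for the ODE} at each step: it identifies the $q$-component of the numerical flow generated by \eqref{eq: PRK for DAE for linear alpha} (together with the internal stages $Q_i$, $\dot Q_i$) with the $s$-stage Runge--Kutta method with the same coefficients $a_{ij}$, $b_i$ applied to the first-order system \eqref{eq:E-L ODE for linear alpha}, equivalently to the explicit ODE $\dot q=\Lambda^{-1}DH(q)$. Because $H$ is smooth and $\Lambda$ is a constant invertible matrix, the vector field $q\mapsto\Lambda^{-1}DH(q)$ is smooth, so on an open set $U$ with compact closure and for finite times $T\le\bar T$ the classical error analysis of Runge--Kutta methods (see \cite{HWODE1}) applies verbatim; in particular the global error in $q$ after $K=T/h$ steps satisfies exactly the classical Runge--Kutta estimate, with no order reduction.

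It remains to transfer the $q$-estimate to the full discrete Hamiltonian map. Writing $(q_K,p_K)=(\tilde F_{L_d})^K(q_0,p_0)$ and $(q(T),p(T))=\tilde\varphi_T(q_0,p_0)$, both pairs lie on $N$ --- the exact flow $\tilde\varphi_t$ preserves $N$ by construction, the numerical one by Corollary~\ref{thm: Invariance of the primary constraint} --- so $p_K=-\tfrac12\Lambda q_K$ and $p(T)=-\tfrac12\Lambda q(T)$, whence $\|p_K-p(T)\|\le\tfrac12\|\Lambda\|\,\|q_K-q(T)\|$. The momentum error is therefore controlled by the position error at the same order, so $\|(\tilde F_{L_d})^K(q_0,p_0)-\tilde\varphi_T(q_0,p_0)\|$ inherits the classical order of convergence, which is the assertion.

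I do not expect a serious obstacle: the theorem is essentially a corollary of Theorem~\ref{thm: Equivalence with a RK for the ODE}, which recasts the index-1 DAE \eqref{eq: Hamiltonian DAE for linear alpha} as an honest non-stiff ODE. The two points that deserve care are (i) verifying that the exact equivalence of Theorem~\ref{thm: Equivalence with a RK for the ODE} persists along the entire trajectory rather than merely at the first step --- which is exactly what the invariance of the primary constraint, Corollary~\ref{thm: Invariance of the primary constraint}, guarantees --- and (ii) keeping the standing assumption that $\mathcal{A}$ is invertible, which is needed both for the well-posedness of the scheme and for the equivalence, and which in particular rules out explicit Runge--Kutta methods.
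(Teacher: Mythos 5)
Your proposal is correct and follows essentially the same route as the paper's proof: invoke Theorem~\ref{thm: Equivalence with a RK for the ODE} to obtain the classical global error estimate in the $q$ variable, then use Corollary~\ref{thm: Invariance of the primary constraint} and the bound $\|p_K-p(T)\|\le\tfrac12\|\Lambda\|\,\|q_K-q(T)\|$ to transfer it to the momentum. Your explicit remark that the constraint $p=-\tfrac12\Lambda q$ is propagated step by step (so the equivalence holds along the whole trajectory) is a point the paper leaves implicit, but it is the same argument.
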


\begin{proof}
Let $r$ be the classical order of the considered Runge-Kutta method, $(q,p)\in N$ an initial condition, $(q_E(t),p_E(t))$ the exact solution to \eqref{eq: Hamiltonian DAE for linear alpha} such that $(q_E(0),p_E(0))=(q,p)$, and $(q_k,p_k)$ the numerical solution obtained by applying the method \eqref{eq: PRK for DAE for linear alpha} iteratively $k$ times with $(q_0,p_0)=(q,p)$. Theorem~\ref{thm: Equivalence with a RK for the ODE} states that the method \eqref{eq: PRK for DAE for linear alpha} is equivalent to applying the same Runge-Kutta method to the ODE system \eqref{eq:E-L ODE for linear alpha}. Hence, we obtain convergence of order $r$ in the $q$ variable, that is, for a fixed time $T>0$ and an integer $K$ such that $h=T/K$, we have the estimate

\begin{equation}
\label{eq: Convergence in the q component}
\|q_K-q(T)\| \leq C h^{r+1}
\end{equation}

\noindent
for some constant $C>0$ (cf. Definition~\ref{thm: Definition of the order of convergence}). By Corollary~\ref{thm: Invariance of the primary constraint} we know that $p_K = -\frac{1}{2}\Lambda q_K$, so we have the estimate

\begin{equation}
\label{eq: Convergence in the p component}
\|p_K-p(T)\|\leq \frac{1}{2} \|\Lambda\| \|q_K-q(T)\| \leq \frac{1}{2} \|\Lambda\| C h^{r+1},
\end{equation}

\noindent
which completes the proof, since $\|\Lambda\| < +\infty$.\\
\end{proof}

\noindent
Of particular interest to us are Runge-Kutta methods that satisfy the condition \eqref{eq: Symplecticity condition for VPRK}, for instance symplectic diagonally-implicit Runge-Kutta methods (DIRK) or Gauss collocation methods (see \cite{HLWGeometric}). The $s$-stage Gauss method is of classical order $2s$, therefore we have:

\begin{corollary}
\label{thm: Order of convergence of Gauss methods}
The $s$-stage Gauss collocation method applied to the DAE system \eqref{eq: Hamiltonian DAE for linear alpha} is convergent of order $2s$.
\end{corollary}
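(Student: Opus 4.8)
The plan is to derive this as an immediate consequence of Theorem~\ref{thm: Retention of the classical order of convergence}, once two standard facts about the $s$-stage Gauss collocation method are in place: that its Runge--Kutta matrix $\mathcal{A}=(a_{ij})$ is invertible, and that its classical order of convergence is $2s$. Both are textbook material (see \cite{HWODE1}, \cite{HLWGeometric}), so the work consists mostly in checking that the hypotheses of the earlier results are met.

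First I would verify invertibility of $\mathcal{A}$, which is what makes Theorem~\ref{thm: Existence of the numerical solution for PRK} and Theorem~\ref{thm: Equivalence with a RK for the ODE} (and hence Theorem~\ref{thm: Retention of the classical order of convergence}) applicable here. For the Gauss method the nodes $c_1,\ldots,c_s$ are the roots of the degree-$s$ shifted Legendre polynomial, so they are pairwise distinct and all lie in the open interval $(0,1)$; the coefficients are the collocation coefficients $a_{ij}=\int_0^{c_i}\ell_j(\tau)\,d\tau$, where $\ell_j$ is the Lagrange basis polynomial for the node set $\{c_1,\ldots,c_s\}$. Since the nodes are distinct and nonzero, $\mathcal{A}$ is nonsingular. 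Thus, for a consistent initial value $(q,p)\in N$ (so that $p=-\tfrac12\Lambda q$), Theorem~\ref{thm: Existence of the numerical solution for PRK} yields a locally unique numerical solution of \eqref{eq: PRK for DAE for linear alpha} and Theorem~\ref{thm: Equivalence with a RK for the ODE} identifies it with the same Gauss method applied to the ODE \eqref{eq:E-L ODE for linear alpha}.

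Next I would recall that the $s$-stage Gauss method has classical order $r=2s$; inserting this into Theorem~\ref{thm: Retention of the classical order of convergence} gives, for $h=T/K$, the bound $\|q_K-q(T)\|\le C h^{2s+1}$, and since $p_K=-\tfrac12\Lambda q_K$ by Corollary~\ref{thm: Invariance of the primary constraint} one also gets $\|p_K-p(T)\|\le \tfrac12\|\Lambda\|\,C h^{2s+1}$, which is exactly convergence of order $2s$ in the sense of Definition~\ref{thm: Definition of the order of convergence}. I would close with the remark that, because the Gauss coefficients satisfy the symplecticity condition \eqref{eq: Symplecticity condition for VPRK}, this scheme is in fact a variational integrator, hence symplectic on $N$ by Corollary~\ref{thm: Symplecticity on the primary constraint}. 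There is no real obstacle in this argument; the only point needing a moment's care is confirming the invertibility of the Gauss Runge--Kutta matrix, which is what licenses the use of the preceding theorems — everything else is a direct citation or a one-line substitution.
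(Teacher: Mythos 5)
Your proposal is correct and follows essentially the same route as the paper, which derives the corollary directly from Theorem~\ref{thm: Retention of the classical order of convergence} together with the textbook fact that the $s$-stage Gauss method has classical order $2s$. Your extra check that the Gauss Runge--Kutta matrix $\mathcal{A}$ is invertible (so that Theorems~\ref{thm: Existence of the numerical solution for PRK} and \ref{thm: Equivalence with a RK for the ODE} apply) is a worthwhile detail the paper leaves implicit, but it does not change the argument.
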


\noindent
As mentioned in Section~\ref{sec: Variational error analysis}, the midpoint rule is a 1-stage Gauss method, therefore it retains its classical second order of convergence.

\paragraph{Backward error analysis.} The system \eqref{eq:E-L ODE for linear alpha} can be rewritten as the Poisson system

\begin{equation}
\label{eq:Poisson E-L ODE for linear alpha}
\dot q = \Lambda^{-1}DH(q)
\end{equation}

\noindent
with the structure matrix $\Lambda^{-1}$ (see \cite{MarsdenRatiuSymmetry}, \cite{HLWGeometric}). The flow $\varphi_t$ for this equation is a \emph{Poisson map}, that is, it satisfies the property

\begin{equation}
\label{eq: Poisson map property}
D\varphi_t(q) \, \Lambda^{-1} \,[D\varphi_t(q)]^{T} = \Lambda^{-1},
\end{equation}

\noindent
which is in fact equivalent to the symplecticity property \eqref{eq: phi symplectic on Q} or \eqref{eq: tilde phi symplectic on N} written in local coordinates on $Q$ or $N$, respectively. Let $F_h: Q \longrightarrow Q$ represent the numerical flow defined by some numerical algorithm applied to \eqref{eq:Poisson E-L ODE for linear alpha}. We say this flow is a \emph{Poisson integrator} if

\begin{equation}
\label{eq: Poisson integrator criterion}
DF_h(q) \, \Lambda^{-1} \,[DF_h(q)]^{T} = \Lambda^{-1}.
\end{equation}

\noindent
The left-hand side of \eqref{eq: Poisson map property} can be regarded as a quadratic invariant of \eqref{eq:Poisson E-L ODE for linear alpha}. By Theorem~\ref{thm: Equivalence with a RK for the ODE} the method \eqref{eq: PRK for DAE for linear alpha} is equivalent to applying the same Runge-Kutta method to \eqref{eq:Poisson E-L ODE for linear alpha}. If its coefficients also satisfy the condition \eqref{eq: Symplecticity condition for VPRK}, then it can be shown that the method preserves quadratic invariants (see Theorem~IV.2.2 in \cite{HLWGeometric}). Therefore, we have:

\begin{corollary}
\label{thm: R-K methods as Poisson integrators}
If $\mathcal{A}=(a_{ij})$ is invertible, the coefficients $a_{ij}$ and $b_i$ satisfy the condition \eqref{eq: Symplecticity condition for VPRK}, and $p=-\frac{1}{2}\Lambda q$, then the method \eqref{eq: PRK for DAE for linear alpha} is a Poisson integrator for \eqref{eq:Poisson E-L ODE for linear alpha}.
\end{corollary}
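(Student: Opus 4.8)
The plan is to reduce the statement, via Theorem~\ref{thm: Equivalence with a RK for the ODE}, to the classical fact that Runge-Kutta methods satisfying \eqref{eq: Symplecticity condition for VPRK} conserve quadratic invariants, and to apply this to the variational equation of the Poisson system \eqref{eq:Poisson E-L ODE for linear alpha}. First I would invoke Theorem~\ref{thm: Equivalence with a RK for the ODE}: since $\mathcal{A}$ is invertible and $p=-\tfrac12\Lambda q$, the map produced by one step of \eqref{eq: PRK for DAE for linear alpha} coincides with the map $F_h:Q\longrightarrow Q$ obtained by applying the \emph{same} Runge-Kutta method to $\dot q=\Lambda^{-1}DH(q)$. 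By Theorem~\ref{thm: Existence of the numerical solution for PRK} (plus the implicit function theorem) $F_h$ is well defined and smooth for $h$ small, so $DF_h(q)$ makes sense; and by Corollary~\ref{thm: Invariance of the primary constraint} the momentum component is slaved as $\bar p=-\tfrac12\Lambda\bar q$, so it suffices to verify $DF_h(q)\,\Lambda^{-1}\,[DF_h(q)]^T=\Lambda^{-1}$, i.e. \eqref{eq: Poisson integrator criterion} in the $q$-variable.

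Next I would differentiate the Runge-Kutta stage equations for $\dot q=\Lambda^{-1}DH(q)$ with respect to the initial value $q$. The relations satisfied by $Y_i=\partial Q_i/\partial q$ and $\bar Y=\partial\bar q/\partial q=DF_h(q)$ are precisely the same Runge-Kutta method applied to the augmented system consisting of \eqref{eq:Poisson E-L ODE for linear alpha} together with its variational equation $\dot u=\Lambda^{-1}D^2H(q(t))\,u$. The key elementary computation is that for any two solutions $u(t),v(t)$ of this variational equation along a common trajectory, the bilinear form $u^T\Lambda v$ is conserved: using $\Lambda^T=-\Lambda$ and the symmetry of $D^2H$, $\tfrac{d}{dt}(u^T\Lambda v)=(\Lambda^{-1}D^2H\,u)^T\Lambda v+u^T\Lambda(\Lambda^{-1}D^2H\,v)=-u^TD^2H\,v+u^TD^2H\,v=0$. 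Hence $u^T\Lambda v$ is a genuine quadratic invariant of the augmented system.

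Then, because the coefficients satisfy \eqref{eq: Symplecticity condition for VPRK} (for a non-partitioned method this reads $b_ia_{ij}+b_ja_{ji}=b_ib_j$), Theorem~IV.2.2 in \cite{HLWGeometric} guarantees that the discrete flow of the augmented Runge-Kutta scheme preserves $u^T\Lambda v$. Letting the two copies of the variational initial data range over a basis, this says $(DF_h(q)u_0)^T\Lambda(DF_h(q)v_0)=u_0^T\Lambda v_0$ for all $u_0,v_0$, i.e. $[DF_h(q)]^T\Lambda\,DF_h(q)=\Lambda$. Finally, since $DF_h(q)$ is invertible for small $h$, this is algebraically equivalent to the desired property: from $[DF_h]^T\Lambda DF_h=\Lambda$ one gets $DF_h=\Lambda^{-1}[DF_h]^{-T}\Lambda$, whence $DF_h\,\Lambda^{-1}\,[DF_h]^T=\Lambda^{-1}[DF_h]^{-T}[DF_h]^T=\Lambda^{-1}$, which is \eqref{eq: Poisson integrator criterion} (equivalently \eqref{eq: Poisson map property} for the numerical flow).

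I do not expect a genuine obstacle here: the content is essentially a transcription of the standard proof that symplectic Runge-Kutta methods are symplectic to the constant-structure-matrix Poisson setting, made possible by Theorem~\ref{thm: Equivalence with a RK for the ODE}. The only points requiring a little care are the bookkeeping showing that differentiating the stage equations yields \emph{verbatim} the Runge-Kutta discretization of the variational equation (so that Theorem~IV.2.2 applies without modification) and the final transpose/inverse manipulation; the antisymmetry and invertibility of $\Lambda$ are used essentially in both the conservation computation and this last step.
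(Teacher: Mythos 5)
Your proposal is correct and follows essentially the same route as the paper: reduce to a Runge--Kutta method for $\dot q=\Lambda^{-1}DH(q)$ via Theorem~\ref{thm: Equivalence with a RK for the ODE}, observe that the Poisson-map property amounts to conservation of a quadratic invariant of the system augmented by its variational equation, and invoke Theorem~IV.2.2 of \cite{HLWGeometric} under condition \eqref{eq: Symplecticity condition for VPRK}. The only difference is that you spell out the details (the conservation of $u^T\Lambda v$ and the final transpose/inverse manipulation) that the paper leaves as a remark preceding the corollary.
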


\noindent
The true power of symplectic integrators for Hamiltonian equations is revealed through their backward error analysis: a symplectic integrator for a Hamiltonian system with the Hamiltonian $H(q,p)$ defines the \emph{exact} flow for a nearby Hamiltonian system, whose Hamiltonian can be expressed as the asymptotic series

\begin{equation}
\label{eq:ModifiedHamiltonian}
\tilde{H}(q,p) = H(q,p) + h H_2(q,p) + h^2 H_3(q,p) + \ldots
\end{equation}

\noindent
Owing to this fact, under some additional assumptions, symplectic numerical schemes nearly conserve the original Hamiltonian $H(q,p)$ over exponentially long time intervals (see \cite{HLWGeometric} for details). A similar result holds for Poisson integrators for Poisson systems: a Poisson integrator defines the exact flow for a nearby Poisson system, whose structure matrix is the same and whose Hamiltonian has the asymptotic expansion \eqref{eq:ModifiedHamiltonian} (see Theorem~IX.3.6 in \cite{HLWGeometric}). Therefore, we expect the non-partitioned Runge-Kutta schemes \eqref{eq: PRK for DAE for linear alpha} satisfying the condition \eqref{eq: Symplecticity condition for VPRK} to demonstrate good preservation of the original Hamiltonian $H$. See Section~\ref{sec: Numerical experiments} for numerical examples.

Partitioned Runge-Kutta methods do not seem to have special properties when applied to systems with linear $\alpha_\mu(q)$, therefore we describe them in the general case in Section~\ref{sec: Nonlinear alpha}.

\subsection{Nonlinear $\alpha_\mu(q)$}
\label{sec: Nonlinear alpha}

When the coordinates $\alpha_\mu(q)$ are nonlinear functions of $q$, then the Runge-Kutta methods discussed in Section~\ref{sec: Linear alpha} lose some of their properties: a theorem similar to Theorem~\ref{thm: Equivalence with a RK for the ODE} cannot be proved, most of the Runge-Kutta methods (whether non-partitioned or partitioned) do not preserve the algebraic constraint $p = \alpha(q)$, i.e., the numerical solution does not stay on the primary constraint $N$, and therefore their order of convergence is reduced, unless they are \emph{stiffly accurate}.

\subsubsection{Runge-Kutta methods}
\label{sec: Runge-Kutta methods}

Let us again consider non-partitioned methods with $a_{ij}=\bar a_{ij}$. Convergence results for some classical Runge-Kutta schemes of interest can be obtained by transforming \eqref{eq: Hamiltonian DAE} into a semi-explicit index-2 DAE system. Let us briefly review this approach. More details can be found in \cite{HLLectureNotes} and \cite{HWODE2}.

The system \eqref{eq: Hamiltonian DAE} can be written as the quasi-linear DAE

\begin{equation}
\label{eq: Quasi-linear DAE form}
C(y) \dot y = f(y),
\end{equation}

\noindent
where $y=(q,p)$ and

\begin{equation}
\label{eq: C(y) and f(y)}
C(y) = \left(
\begin{matrix}
[D\alpha(q)]^T & -I_n \\
0 & 0 \\
\end{matrix}
\right),
\qquad\qquad\qquad
f(y) = \left(
\begin{matrix}
DH(q) \\
p-\alpha(q) \\
\end{matrix}
\right),
\end{equation}

\noindent
where $I_n$ denotes the $n\times n$ identity matrix. Let us introduce a slack variable $z$ and rewrite \eqref{eq: Quasi-linear DAE form} as the index-2 DAE system

\begin{subequations}
\label{eq: Index 2 DAE form}
\begin{align}
\label{eq: Index 2 DAE form 1}
\dot y &= z, \\
\label{eq: Index 2 DAE form 2}
0 &= C(y)z-f(y).
\end{align}
\end{subequations}

\noindent
This system is of index 2, because it has $4n$ dependent variables, but only $2n$ differential equations \eqref{eq: Index 2 DAE form 1}, and some components of the algebraic equations \eqref{eq: Index 2 DAE form 2} have to be differentiated twice with respect to time in order to derive the missing differential equations for $z$. Note that $C(y)$ is a singular matrix of constant rank $n$, therefore it can be decomposed (using Gauss elimination or the singular value decomposition) as

\begin{equation}
\label{eq: C(y) decomposition}
C(y) = S(y)
\left(
\begin{matrix}
I_n & 0 \\
0 & 0 \\
\end{matrix}
\right)
T(y)
\end{equation}

\noindent
for some non-singular matrices $S(y)$ and $T(y)$. Since $\alpha(q)$ is assumed to be smooth, one can choose $S$ and $T$ so that they are also smooth (at least in a neighborhood of $y$). Premultiplying both sides of \eqref{eq: Index 2 DAE form 2} by $S^{-1}(y)$ turns the DAE \eqref{eq: Index 2 DAE form} into

\begin{subequations}
\label{eq: Index 2 DAE block form}
\begin{align}
\label{eq: Index 2 DAE block form 1}
\dot y_1 &= z_1, \\
\label{eq: Index 2 DAE block form 2}
\dot y_2 &= z_2, \\
\label{eq: Index 2 DAE block form 3}
0 &= T_{11}(y) \,z_1 + T_{12}(y) \, z_2-\tilde f_1(y), \\
\label{eq: Index 2 DAE block form 4}
0 &= \tilde f_2(y),
\end{align}
\end{subequations}

\noindent
where we introduced the block structure $y=(y_1, y_2)$, $z = (z_1,z_2)$, and

\begin{equation}
\label{eq: Block structure of the index 2 DAE}
T(y)=\left(
\begin{matrix}
T_{11} & T_{12} \\
T_{21} & T_{22}
\end{matrix} \right),
\qquad\qquad
S^{-1}(y) \, f(y) = \left(
\begin{matrix}
\tilde f_1(y) \\
\tilde f_2(y)
\end{matrix} \right).
\end{equation}

\noindent
Since $T(y)$ is invertible, we can assume without loss of generality that the block $T_{11}(y)$ is invertible, too (one can always permute the columns of $T(y)$ otherwise). Let us compute $z_1$ from \eqref{eq: Index 2 DAE block form 3} and substitute it in \eqref{eq: Index 2 DAE block form 1}. The resulting system,

\begin{subequations}
\label{eq: Index 2 DAE final block form}
\begin{align}
\label{eq: Index 2 DAE final block form 1}
\dot y_1 &= \big(T_{11}(y)\big)^{-1} \big( \tilde f_1(y) - T_{12}(y) z_2\big), \\
\label{eq: Index 2 DAE final block form 2}
\dot y_2 &= z_2, \\
\label{eq: Index 2 DAE final block form 3}
0 &= \tilde f_2(y),
\end{align}
\end{subequations}

\noindent
has the form of a semi-explicit index-2 DAE

\begin{align}
\label{eq: Generic semi-explicit index 2 DAE}
\dot y &= F(y,z_2), \nonumber \\
0 &= G(y),
\end{align}

\noindent
provided that 

\begin{equation}
\label{eq: Index 2 condition}
D_y G \,D_{z_2} F = -D_{y_1} \tilde f_2 \,T_{11}^{-1}\, T_{12}+ D_{y_2} \tilde f_2
\end{equation}

\noindent
has a bounded inverse.

It is an elementary exercise to show that the partitioned Runge-Kutta method \eqref{eq: PRK for DAE} is invariant under the presented transformation, that is, it defines a numerically equivalent partitioned Runge-Kutta method for \eqref{eq: Index 2 DAE final block form}. Runge-Kutta methods for semi-explicit index-2 DAEs have been studied and some convergence results are available. Convergence estimates for the $y$ component of \eqref{eq: Index 2 DAE final block form} can be readily applied to the solution of \eqref{eq: Quasi-linear DAE form}.

As in Section~\ref{sec: Linear alpha}, of particular interest to us are variational Runge-Kutta methods, i.e., methods satisfying the condition \eqref{eq: Symplecticity condition for VPRK}, for example Gauss collocation methods (see \cite{HLWGeometric}, \cite{HWODE1}). However, in the case when $\alpha(q)$ is a nonlinear function, the solution generated by the Gauss methods does not stay on the primary constraint $N$ and this affects their rate of convergence, as will be shown below. For comparison, we will also consider the Radau IIA methods (see \cite{HWODE2}), which, although not variational/symplectic, are \emph{stiffly accurate}, that is, their coefficients satisfy $a_{sj} = b_j$ for $j=1,\dots,s$, so the numerical value of the solution at the new time step is equal to the value of the last internal stage, and therefore the numerical solution stays on the submanifold $N$. We cite the following convergence rates for the $y$ component of \eqref{eq: Generic semi-explicit index 2 DAE} after \cite{HWODE2} and \cite{HLLectureNotes}:

\begin{itemize}
\item $s$-stage Gauss method---convergent of order $\left\{ \begin{array}{cl} s+1 & \text{for $s$ odd} \\ s & \text{for $s$ even} \end{array}\right.$,
\item $s$-stage Radau IIA method---convergent of order $2s-1$.
\end{itemize}

\noindent
With the exception of the midpoint rule ($s=1$), we see that the order of convergence of the Gauss methods is reduced. On the other hand, the Radau IIA methods retain their classical order $2s-1$.

\paragraph{Symplecticity.} Since the Gauss methods satisfy the condition \eqref{eq: Symplecticity condition for VPRK}, they generate a flow which preserves the canonical symplectic form $\tilde \Omega$ on $T^*Q$, as explained in Section~\ref{sec: Discrete Mechanics}. However, since the primary constraint $N$ is not invariant under this flow, a result analogous to Corollary~\ref{thm: Symplecticity on the primary constraint} does not hold, i.e., the flow is not symplectic on $N$.

\subsubsection{Partitioned Runge-Kutta methods}
\label{sec: Partitioned Runge-Kutta methods}

In Section~\ref{sec: Numerical experiments} we present numerical results for the Lobatto IIIA-IIIB methods (see \cite{HLWGeometric}). Their numerical performance appears rather unattractive, therefore our theoretical results regarding partitioned Runge-Kutta methods are less complete. Below we summarize the experimental orders of convergence of the Lobatto~IIIA-IIIB schemes that we observed in our numerical computations (see Figure~\ref{fig: Convergence plots for Kepler's problem}, Figure~\ref{fig: Convergence plot for point vortices}, and Figure~\ref{fig: Convergence plot for the Lotka-Volterra model}):

\begin{itemize}
\item $2$-stage Lobatto~IIIA-IIIB---inconsistent,
\item $3$-stage Lobatto~IIIA-IIIB---convergent of order 2,
\item $4$-stage Lobatto~IIIA-IIIB---convergent of order 2.
\end{itemize}

Comments regarding the symplecticity of these schemes are the same as for the Gauss methods mentioned above in Section~\ref{sec: Runge-Kutta methods}.

\section{Numerical experiments}
\label{sec: Numerical experiments}

In this section we present the results of the numerical experiments we performed to test the methods discussed in Section~\ref{sec: Variational partitioned Runge-Kutta methods}. We consider Kepler's problem, the dynamics of planar point vortices, and the Lotka-Volterra model, and we show how each of these models can be formulated as a Lagrangian system linear in velocities.

\subsection{Kepler's problem}
\label{sec: Kepler's problem}

A particle or a planet moving in a central potential in two dimensions can be described by the Hamiltonian

\begin{equation}
\label{eq: Kepler's problem Hamiltonian}
H(x,y,p_x,p_y) = \frac{1}{2}p_x^2+\frac{1}{2}p_x^2 -\frac{1}{\sqrt{x^2+y^2}}-H_0,
\end{equation}

\noindent
where $(x,y)$ denotes the position of the planet and $(p_x,p_y)$ its momentum; $H_0$ is an arbitrary constant. The corresponding Lagrangian can be obtained in the usual way as

\begin{equation}
\label{eq: General L for Kepler's problem}
L = p_x \dot x + p_y \dot y - H(x,y,p_x,p_y).
\end{equation}

\noindent
If one performs the standard Legendre transform $\dot x = \partial H / \partial p_x$, $\dot y = \partial H / \partial p_y$, then $L=L(x,y,\dot x, \dot y)$ will take the usual nondegenerate form, quadratic in velocities. However, one can also introduce the variable $q=(x,y,p_x,p_y)$ and view $L=L(q,\dot q)$ as \eqref{eq: Linear Lagrangian in Coordinates}, that is, a Lagrangian linear in velocities (see~\cite{Faddeev}). Comparing \eqref{eq: General L for Kepler's problem} and \eqref{eq: Bilinear Lagrangian in coordinates} we see that the corresponding $\Lambda$ is singular. Without loss of generality we replace $\Lambda$ with its antisymmetric part $(\Lambda - \Lambda^T)/2$, which is invertible, and consider the Lagrangian

\begin{equation}
\label{eq: L for Kepler's problem}
L = \frac{1}{2} q^3 \dot q^1 + \frac{1}{2} q^4 \dot q^2 - \frac{1}{2} q^1 \dot q^3 - \frac{1}{2} q^2 \dot q^4 - H(q).
\end{equation}

As a test problem we considered an elliptic orbit with eccentricity $e=0.5$ and semi-major axis $a=1$. We took the initial condition at the pericenter, i.e., $q^1_{init}=(1-e)a=0.5$, $q^2_{init}=0$, $q^3_{init} = 0$, $q^4_{init} = a\sqrt{(1+e)/(1-e)}\approx 1.73$. This is a periodic orbit with period $T_{period}=2 \pi$. A reference solution was computed by integrating \eqref{eq:E-L ODE for linear alpha} until the time $T=7$ using Verner's method (a 6-th order explicit Runge-Kutta method; see \cite{HWODE1}) with the small time step $h = 2 \times 10^{-7}$. The reference solution is depicted in Figure~\ref{fig: Reference solution for Kepler's problem}.

\begin{figure}[tbp]
	\centering
		\includegraphics[width=0.8\textwidth]{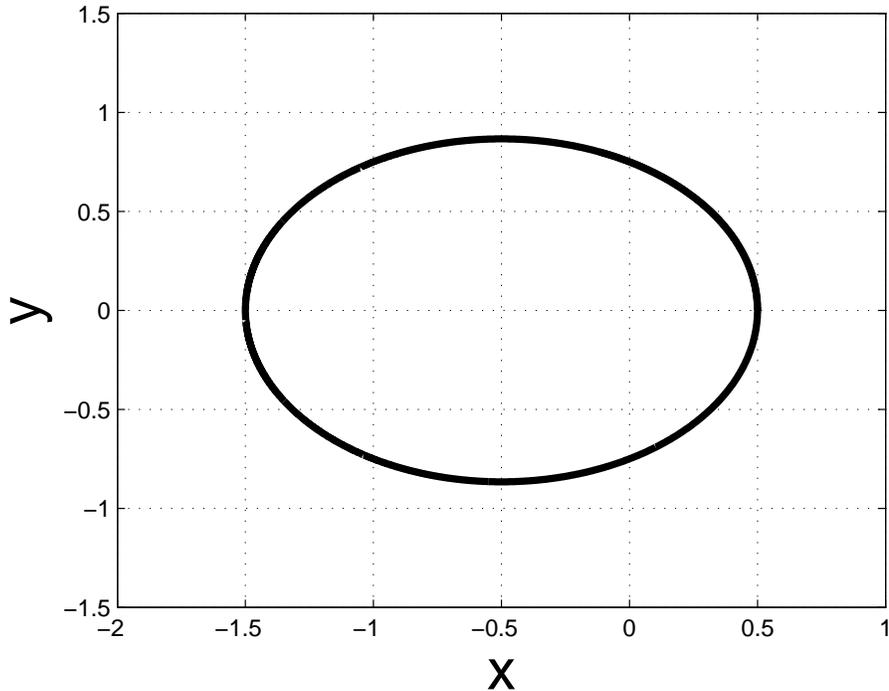}
		\caption{The reference solution for Kepler's problem computed by integrating \eqref{eq:E-L ODE for linear alpha} until the time $T=7$ using Verner's method with the time step $h = 2 \times 10^{-7}$.}
		\label{fig: Reference solution for Kepler's problem}
\end{figure}

We solved the same problem using several of the methods discussed in Section~\ref{sec: Variational partitioned Runge-Kutta methods} for a number of time steps ranging from $h=3.5\times10^{-3}$ to $h=3.5\times10^{-1}$. The value of the solutions at $T=7$ was then compared against the reference solution. The max norm errors are depicted in Figure~\ref{fig: Convergence plots for Kepler's problem}. We see that the rates of convergence of the Gauss and the 3-stage Radau IIA methods are consistent with Theorem~\ref{thm: Retention of the classical order of convergence} and Corollary~\ref{thm: Order of convergence of Gauss methods}. For the Lobatto IIIA-IIIB methods we observe a reduction of order. The 2-stage Lobatto IIIA-IIIB method turns out to be inconsistent and is not depicted in Figure~\ref{fig: Convergence plots for Kepler's problem}. Both the 3- and 4-stage methods converge only quadratically, while their classical orders of convergence are 4 and 6, respectively.

\begin{figure}[tbp]
	\centering
		\includegraphics[width=0.8\textwidth]{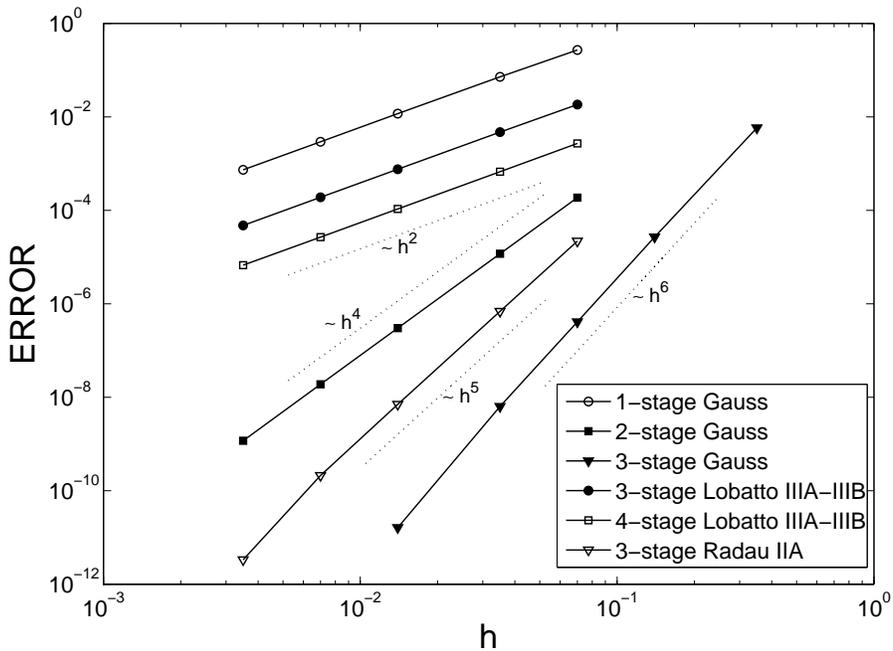}
		\caption{Convergence of several Runge-Kutta methods for Kepler's problem.}
		\label{fig: Convergence plots for Kepler's problem}
\end{figure}

We also investigated the long-time behavior of our integrators and conservation of the Hamiltonian. For convenience, we set $H_0=-0.5$ in \eqref{eq: Kepler's problem Hamiltonian}, so that $H=0$ on the considered orbit. We applied the Gauss methods with the relatively large time step $h=0.1$ and computed the numerical solution until the time $T=5\times 10^5$. Figure~\ref{fig: Energy plots for Gauss methods for Kepler's problem} shows that the Gauss integrators preserve the Hamiltonian very well, which is consistent with Corollary~\ref{thm: R-K methods as Poisson integrators}. We performed similar computations for the Lobatto IIIA-IIIB and Radau IIA methods, also with $h=0.1$. The results are depicted in Figure~\ref{fig: Energy plots for Lobatto and Radau methods for Kepler's problem}. The 3- and 4-stage Lobatto IIIA-IIIB schemes result in instabilities, the planet's trajectory spirals down on the center of gravity, and the computations cannot be continued too far in time. The Hamiltonian shows major variations whose amplitude grows in time. The non-variational Radau IIA scheme yields an accurate solution, but it demonstrates a gradual energy dissipation.

\begin{figure}[tbp]
	\centering
		\includegraphics[width=0.9\textwidth]{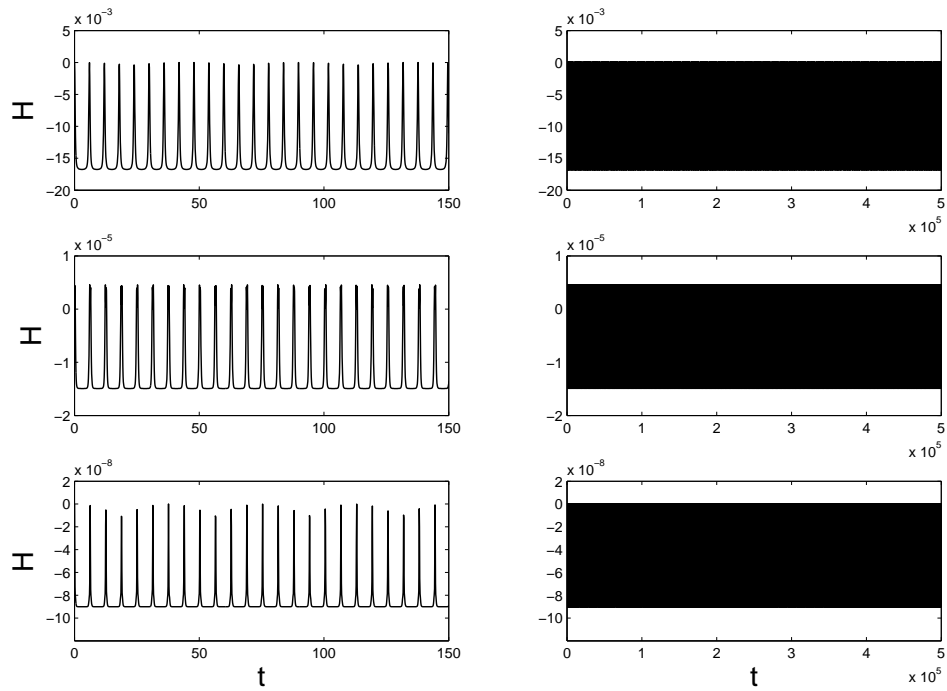}
		\caption{Hamiltonian conservation for the 1-stage (\emph{top row}), 2-stage (\emph{middle row}) and 3-stage (\emph{bottom row}) Gauss methods applied to Kepler's problem with the time step $h=0.1$ over the time interval $[0,5\times 10^5]$ (\emph{right column}), with a close-up on the initial interval $[0,150]$ shown in the \emph{left column}.}
		\label{fig: Energy plots for Gauss methods for Kepler's problem}
\end{figure}

\begin{figure}[tbp]
	\centering
		\includegraphics[width=0.9\textwidth]{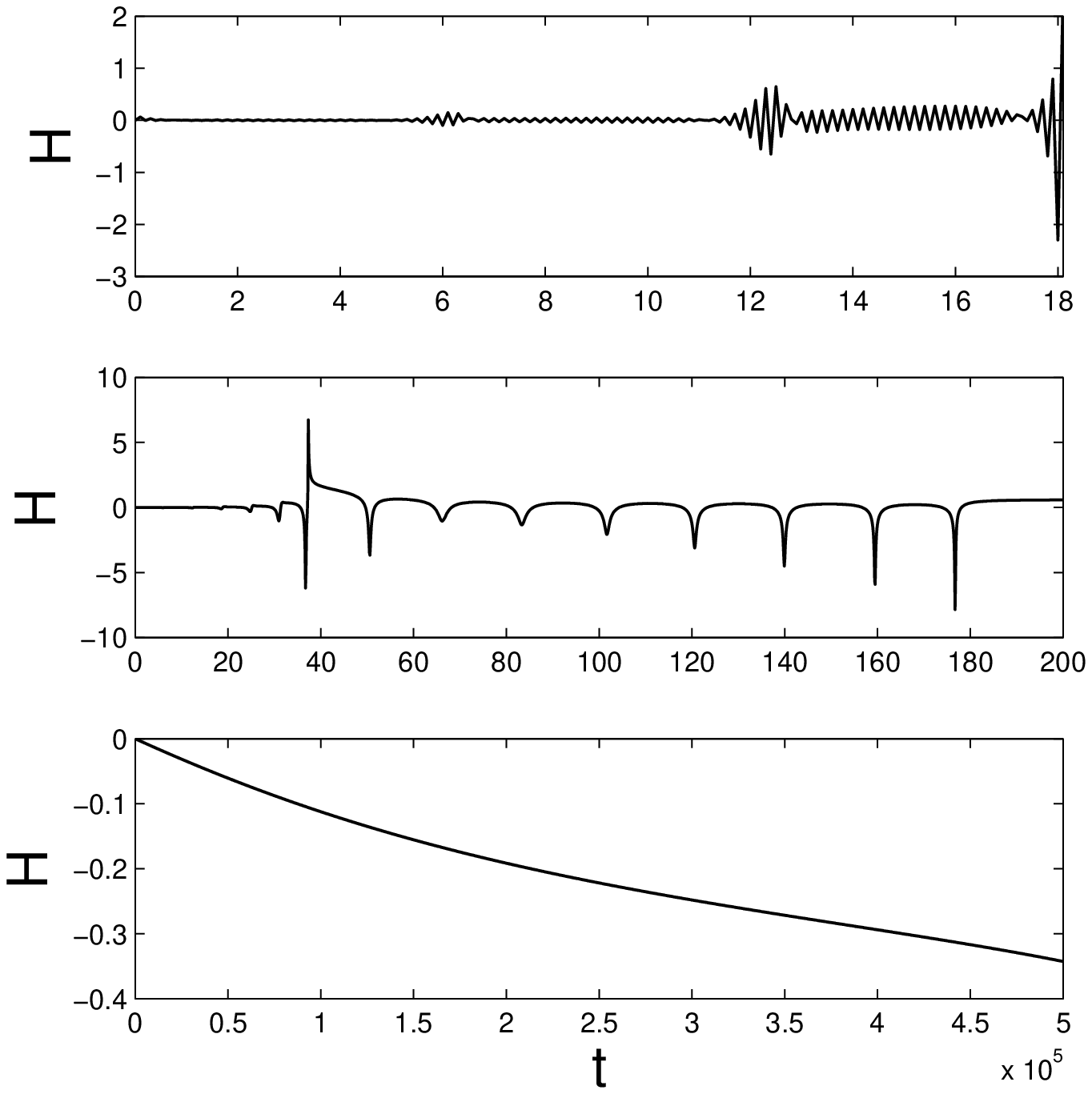}
		\caption{Hamiltonian for the numerical solution of Kepler's problem obtained with the 3- and 4-stage Lobatto IIIA-IIIB schemes (\emph{top} and \emph{middle}, respectively), and the non-variational Radau IIA method (\emph{bottom}).}
		\label{fig: Energy plots for Lobatto and Radau methods for Kepler's problem}
\end{figure}

\subsection{Point vortices}
\label{sec: Point vortices}

Point vortices in the plane are another interesting example of a system with linear $\alpha_\mu(q)$ (see \cite{Newton}, \cite{RowleyMarsden}, \cite{VankerschaverLeok}). A system of $K$ interacting point vortices in two dimensions can be described by the Lagrangian

\begin{equation}
\label{eq: Lagrangian for point vortices}
L(x_1,y_1,\ldots,x_K,y_K,\dot x_1, \dot y_1,\ldots,\dot x_K,\dot y_K) = \frac{1}{2} \sum_{i=1}^K \Gamma_i (x_i \dot y_i - y_i \dot x_i) - H(x_1,y_1,\ldots,x_K,y_K)
\end{equation}

\noindent
with the Hamiltonian

\begin{equation}
\label{eq: Hamiltonian for point vortices}
H(x_1,y_1,\ldots,x_K,y_K) = \frac{1}{4 \pi} \sum_{i < j}^K \Gamma_i \Gamma_j \log \big( (x_i-x_j)^2 + (y_i-y_j)^2 \big)-H_0,
\end{equation}

\noindent
where $(x_i,y_i)$ denotes the location of the $i$-th vortex, $\Gamma_i$ is its circulation, and $H_0$ is an arbitrary constant.

As a test problem we considered the system of $K=2$ vortices with circulations $\Gamma_1=4$ and $\Gamma_2=2$, respectively, and distance $D=1$ between them. The vortices rotate on concentric circles about their center of vorticity at $x_C=0$ and $y_C=0$. We took the initial condition at $x^{(0)}_1= \Gamma_2 D  /(\Gamma_1+\Gamma_2) \approx 0.33$, $y^{(0)}_1=0$, $x^{(0)}_2= -\Gamma_1 D  /(\Gamma_1+\Gamma_2) \approx -0.67$ and $y^{(0)}_2=0$. The analytic solution can be found (see \cite{Newton}) as

\begin{align}
\label{eq: Exact solution for 2 point vortices}
x_1(t) &= \frac{\Gamma_2}{\Gamma_1+\Gamma_2}D\cos \omega t,  \qquad &x_2(t) = -\frac{\Gamma_1}{\Gamma_1+\Gamma_2}D\cos \omega t, \nonumber\\
y_1(t) &= \frac{\Gamma_2}{\Gamma_1+\Gamma_2}D\sin \omega t,  \qquad &y_2(t) = -\frac{\Gamma_1}{\Gamma_1+\Gamma_2}D\sin \omega t,
\end{align}

\noindent
where $\omega = (\Gamma_1+\Gamma_2)/(2 \pi D^2)$. This is a periodic solution with period $T_{period} \approx 6.58$. See Figure~\ref{fig: Reference solution for point vortices}.

\begin{figure}[tbp]
	\centering
		\includegraphics[width=0.8\textwidth]{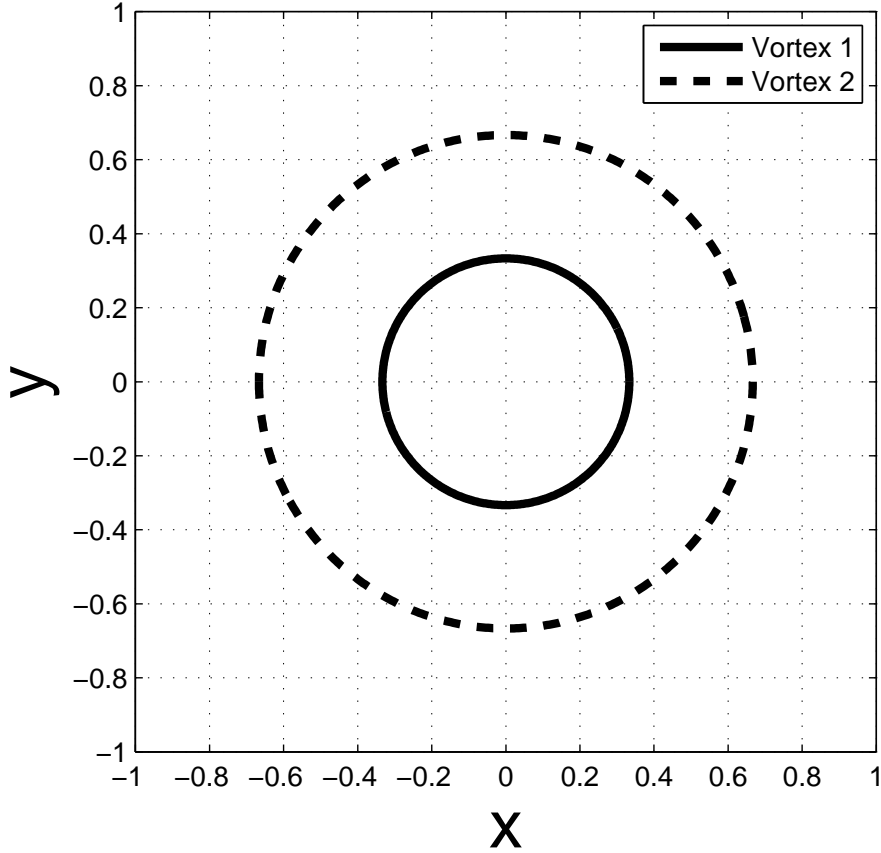}
		\caption{The circular trajectories of the two point vortices rotating about their vorticity center at $x_C=0$ and $y_C=0$.}
		\label{fig: Reference solution for point vortices}
\end{figure}
We performed similar convergence tests as in Section~\ref{sec: Kepler's problem}. The value of the numerical solutions at time T=7 were compared against the exact solution \eqref{eq: Exact solution for 2 point vortices}. The max norm errors are depicted in Figure~\ref{fig: Convergence plot for point vortices}. The results are qualitatively the same as for Kepler's problem.

\begin{figure}[tbp]
	\centering
		\includegraphics[width=0.9\textwidth]{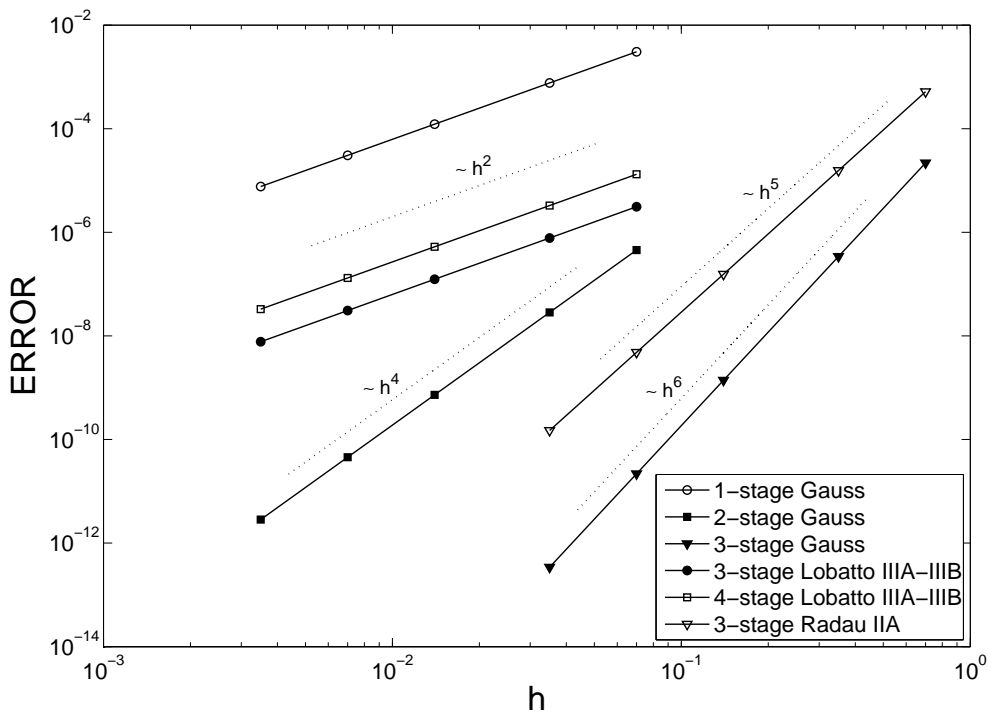}
		\caption{Convergence of several Runge-Kutta methods for the system of two point vortices.}
		\label{fig: Convergence plot for point vortices}
\end{figure}

\begin{figure}[tbp]
	\centering
		\includegraphics[width=0.9\textwidth]{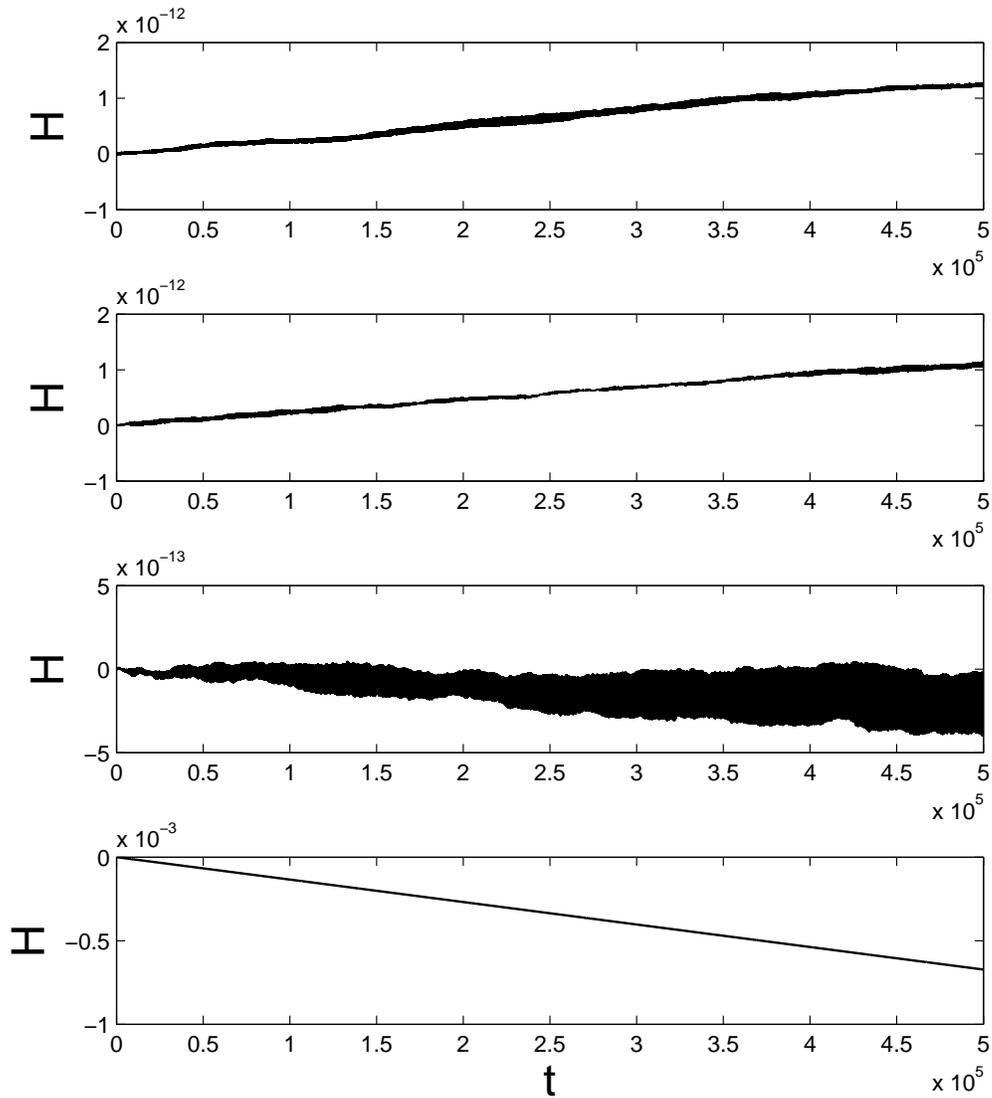}
		\caption{Hamiltonian for the 1-stage (\emph{top}), 2-stage (\emph{second}) and 3-stage (\emph{third}) Gauss, and the 3-stage Radau IIA (\emph{bottom}) methods applied to the system of two point vortices with the time step $h=0.1$ over the time interval $[0,5\times 10^5]$.}
		\label{fig: Energy plots for Gauss methods for point vortices}
\end{figure}

\begin{figure}[tbp]
	\centering
		\includegraphics[width=0.8\textwidth]{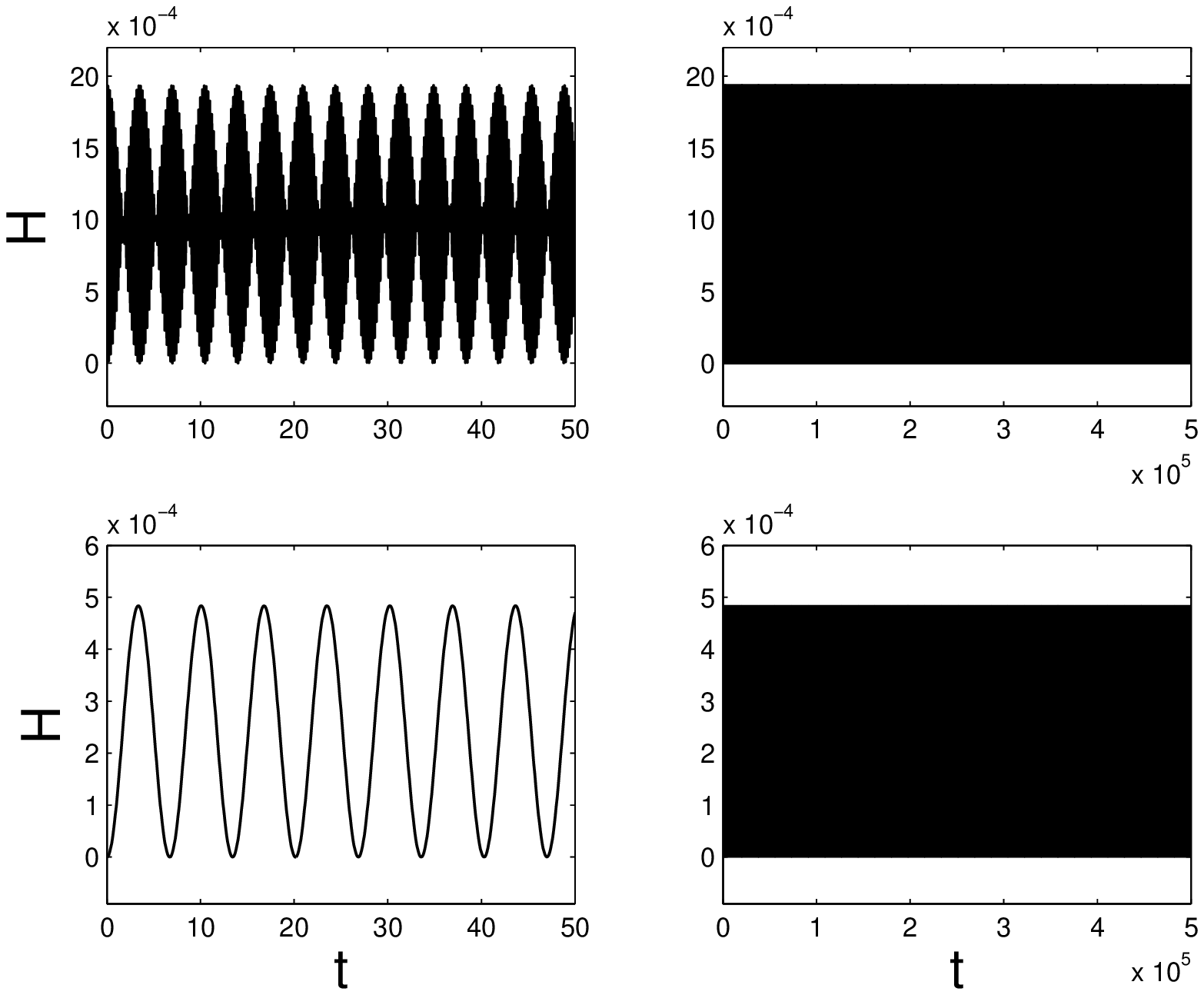}
		\caption{Hamiltonian conservation for the 3-stage (\emph{top}) and 4-stage (\emph{bottom}) Lobatto IIIA-IIIB methods applied to the system of two point vortices with the time step $h=0.1$ over the time interval $[0,5\times 10^5]$ (\emph{right column}), with a close-up on the initial interval $[0,50]$ shown in the \emph{left column}.}
		\label{fig: Energy plots for Lobatto methods for point vortices}
\end{figure}

We set $H_0=0$ in \eqref{eq: Hamiltonian for point vortices}, so that $H=0$ for the considered solution. Figure~\ref{fig: Energy plots for Gauss methods for point vortices} and Figure~\ref{fig: Energy plots for Lobatto methods for point vortices} show the behavior of the numerical Hamiltonian over a long integration interval. The 3- and 4-stage Lobatto IIIA-IIIB integrators performed better than for Kepler's problem. In the case of the Gauss methods the Hamiltonian stayed virtually constant---the visible minor erratic oscillations are the result of round-off errors. The Radau IIA scheme demonstrated a slow but systematic drift.

\subsection{Lotka-Volterra model}
\label{sec: Lotka-Volterra model}

The dynamics of the growth of two interacting species can be modeled by the Lotka-Volterra equations

\begin{align}
\label{eq: Lotka-Volterra equations}
\dot u &= u(v-2), \nonumber \\
\dot v &= v(1-u),
\end{align}

\noindent
where $u(t)$ denotes the number of predators and $v(t)$ the number of prey, and the constants 1 and 2 were chosen arbitrarily. These equations can be rewritten as the Poisson system

\begin{equation}
\label{eq: Lotka-Volterra as a Poisson system}
\left(
\begin{matrix}
\dot u \\
\dot v
\end{matrix}
\right) = \left(
\begin{matrix}
0 & uv \\
-uv & 0
\end{matrix}
\right) DH(u,v),
\end{equation}

\noindent
where the Hamiltonian is given by

\begin{equation}
\label{eq: Hamiltonian for Lotka-Volterra}
H(u,v) = u - \log u + v - 2 \log v - H_0
\end{equation}

\noindent
with an arbitraty constant $H_0$ (see \cite{HLWGeometric}). Using an approach similar to the one presented in Section~\ref{sec: Kepler's problem}, one can easily verify that the Lagrangian

\begin{equation}
L(q,\dot q) = \bigg(\frac{\log q^2}{q^1} + q^2 \bigg) \dot q^1 + q^1 \dot q^2 - H(q)
\end{equation}

\noindent
reproduces the same equations of motion, where $q=(u,v)$. The coordinates $\alpha_\mu(q)$ (cf. Equation~\eqref{eq: Linear Lagrangian in Coordinates}) were chosen, so that the assumptions of Theorem~\ref{thm: Existence of the numerical solution for PRK} are satisfied for the considered Runge-Kutta methods.

As a test problem we considered the solution with the initial condition $q^1_{init}=1$ and $q^2_{init}=1$ (note that $q=(1,2)$ is an equilibrium point). This is a periodic solution with period $T_{period}\approx 4.66$. A reference solution was computed by integrating \eqref{eq:E-L ODE for linear alpha} until the time $T=5$ using Verner's method with the small time step $h = 10^{-7}$. The reference solution is depicted in Figure~\ref{fig: Reference solution for the Lotka-Volterra model}.

\begin{figure}[tbp]
	\centering
		\includegraphics[width=0.8\textwidth]{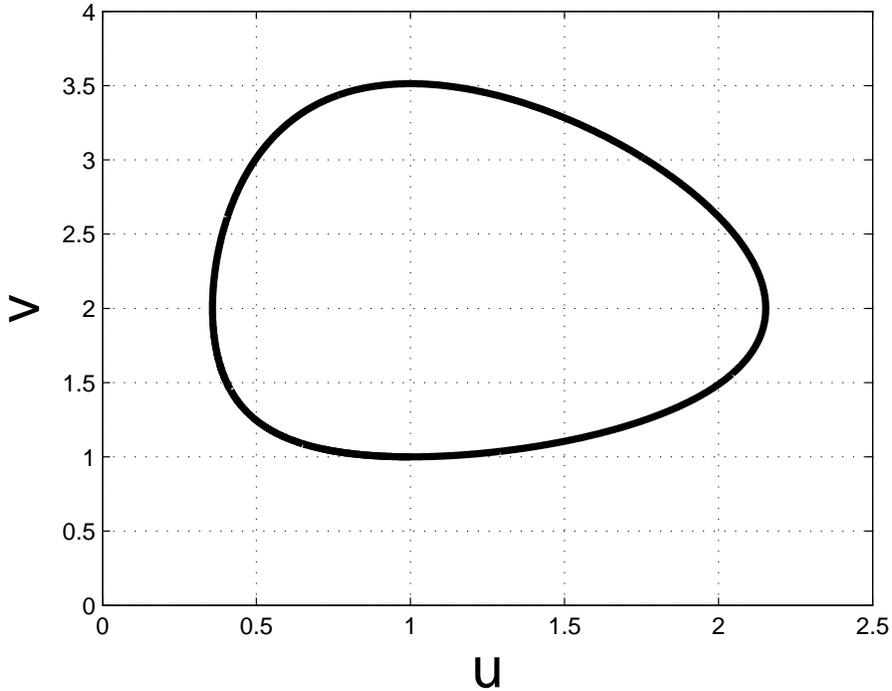}
		\caption{The reference solution for the Lotka-Volterra equations computed by integrating \eqref{eq:E-L ODE for linear alpha} until the time $T=5$ using Verner's method with the time step $h = 10^{-7}$.}
		\label{fig: Reference solution for the Lotka-Volterra model}
\end{figure}

Convergence plots are shown in Figure~\ref{fig: Convergence plot for the Lotka-Volterra model}. The convergence rates for the Gauss and Radau~IIA methods are consistent with the theoretical results presented in Section~\ref{sec: Runge-Kutta methods}---we see that the orders of the 2- and 3-stage Gauss schemes are reduced. The 2-stage Lobatto IIIA-IIIB scheme again proves to be inconsistent, and the 3- and 4-stage schemes converge quadratically, just as in Section~\ref{sec: Kepler's problem} and Section~\ref{sec: Point vortices}.

\begin{figure}[tbp]
	\centering
		\includegraphics[width=0.8\textwidth]{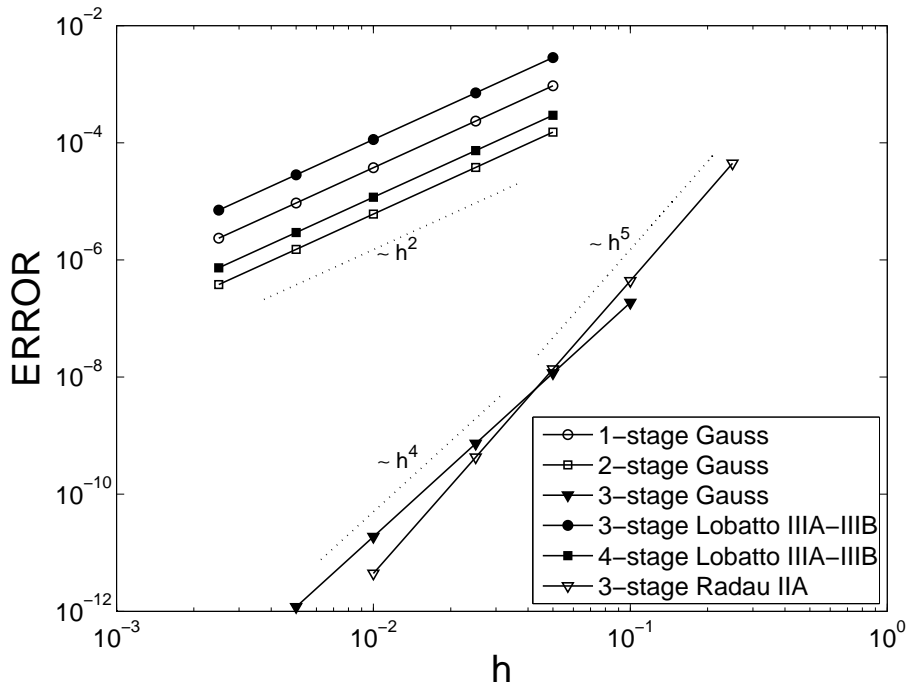}
		\caption{Convergence of several Runge-Kutta methods for the Lotka-Volterra model.}
		\label{fig: Convergence plot for the Lotka-Volterra model}
\end{figure}

We performed another series of numerical experiments with the time step $h=0.1$ to investigate the long time behavior of the considered integrators. The results are shown in Figure~\ref{fig: Energy plots for Gauss methods for the Lotka-Volterra model} and Figure~\ref{fig: Energy plots for Gauss, Lobatto and Radau methods for the Lotka-Volterra model}. We set $H_0=2$ in \eqref{eq: Hamiltonian for Lotka-Volterra}, so that $H=0$ for the considered solution. The 1- and 3-stage Gauss methods again show excellent Hamiltonian conservation over a long time interval. The 2-stage Gauss method, however, does not perform equally well---the Hamiltonian oscillates with an increasing amplitude over time, until the computations finally break down. The Lobatto IIIA-IIIB methods show similar problems as in Section~\ref{sec: Kepler's problem}. The non-variational Radau IIA method yields an accurate solution, but demonstrates a steady drift in the Hamiltonian.

\begin{figure}[tbp]
	\centering
		\includegraphics[width=0.9\textwidth]{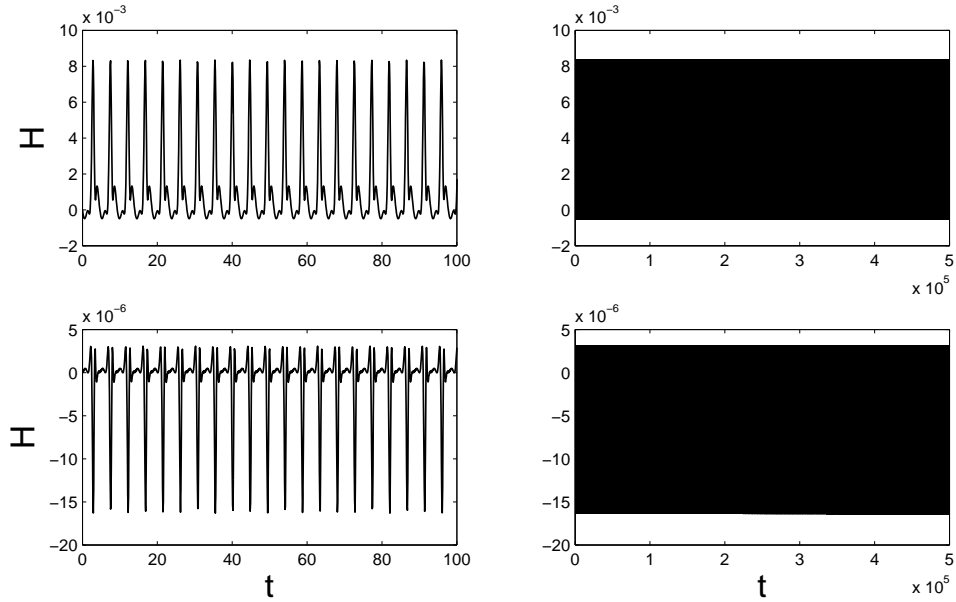}
		\caption{Hamiltonian conservation for the 1-stage (\emph{top row}) and 3-stage (\emph{bottom row}) Gauss methods applied to the Lotka-Volterra model with the time step $h=0.1$ over the time interval $[0,5\times 10^5]$ (\emph{right column}), with a close-up on the initial interval $[0,100]$ shown in the \emph{left column}.}
		\label{fig: Energy plots for Gauss methods for the Lotka-Volterra model}
\end{figure}

\begin{figure}[tbp]
	\centering
		\includegraphics[width=0.9\textwidth]{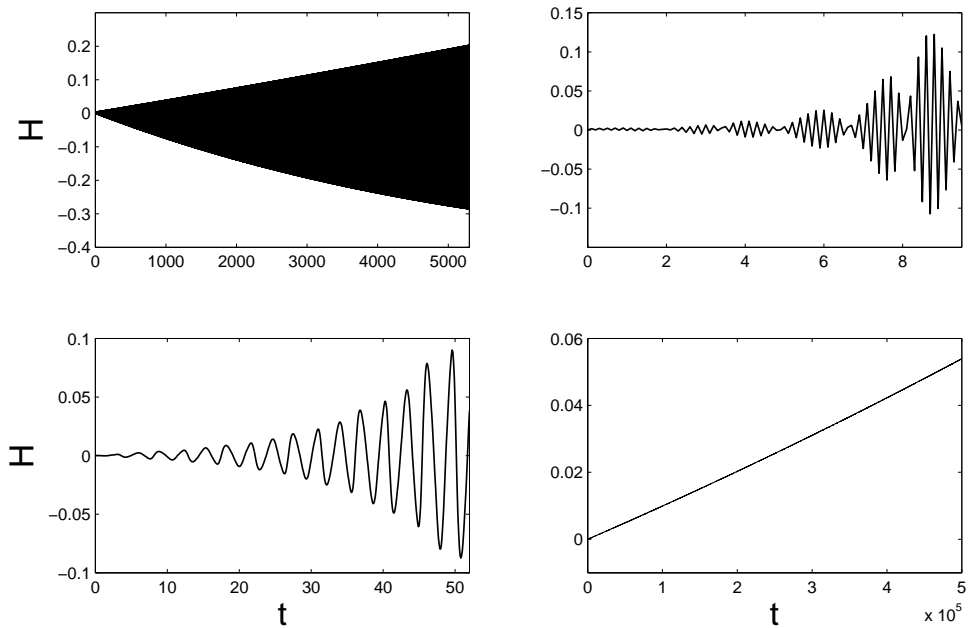}
		\caption{Hamiltonian for the numerical solution of the Lotka-Volterra model obtained with the 2-stage Guass method (\emph{top left}), the 3- and 4-stage Lobatto IIIA-IIIB schemes (\emph{top right} and \emph{bottom left}, respectively), and the non-variational Radau IIA method (\emph{bottom right}).}
		\label{fig: Energy plots for Gauss, Lobatto and Radau methods for the Lotka-Volterra model}
\end{figure}

\section{Summary}
\label{sec: summary}

We analyzed a class of degenerate systems described by Lagrangians that are linear in velocities, and presented a way to construct appropriate higher-order variational integrators. We pointed out how the theory underlying variational integration is different from the non-degenerate case and we made a connection with numerical integration of differential-algebraic equations. We also performed numerical experiments for several example models.

Our work can be extended in several ways. In Section~\ref{sec: Lotka-Volterra model} we presented our numerical results for the Lotka-Volterra model, which is an example of a system for which the coordinate functions $\alpha_\mu(q)$ are nonlinear. The 1- and 3-stage Gauss methods performed exceptionally well and preserved the Hamiltonian over a very long integration time. It would be interesting to perform a backward error (or similar) analysis to check if this behavior is generic. If confirmed, our variational approach could provide a new way to construct geometric integrators for a broader class of Poisson systems.

It would also be interesting to further consider \emph{constrained} systems with Lagrangians that are linear in velocities and construct associated higher-order variational integrators. This would allow to generalize the space-adaptive methods presented in \cite{TyranowskiPHD}, \cite{TyranowskiDesbrunRAMVI} to degenerate field theories, such as the nonlinear Schr\"{o}dinger, KdV or Camassa-Holm equations.

\section*{Acknowledgments}
We would like to thank Prof. Ernst Hairer and Dr. Joris Vankerschaver for useful comments and references. Partial funding was provided by NSF grant CCF-1011944.



\end{document}